\newtheorem{thm}{Theorem}[section]
\newtheorem{cor}[thm]{Corollary}
\newtheorem{lem}[thm]{Lemma}
\theoremstyle{definition}
\theoremstyle{remark}
\newtheorem{rem}[thm]{Remark}
\theoremstyle{example}
\theoremstyle{conjecture}
\numberwithin{equation}{section}
\newcommand{\B}{{\mathbb B}}
\newcommand{\D}{{\mathbb D}}
\newcommand{\R}{{\mathbb R}}
\newcommand{\C}{{\mathbb C}}
\newcommand{\N}{{\mathbb N}}
\newcommand{\XX}{{\mathbb X}}
\newcommand{\YY}{{\mathbb Y}}
\newcommand{\calD}{{\mathcal D}}
\newcommand{\calW}{{\mathcal W}}
\newcommand{\doublewidetilde}[1]{{%
  \mathpalette\double@widetilde{#1}%
}}
\newcommand{\double@widetilde}[2]{%
  \sbox\z@{$\m@th#1\widetilde{#2}$}%
  \ht\z@=.9\ht\z@
  \widetilde{\box\z@}%
}
\def\D{\mathbb{D}}
\def\R{\mathbb R}
\def\C{\mathbb{C}}
\def\msk{\medskip}
\def\bege{\begin{equation}} \def\ende{\end{equation}}
\def\begr{\begin{eqnarray}} \def\endr{\end{eqnarray}}
\begin{document}

\title[Weighted composition operator]{Difference of weighted composition operators on weighted-type spaces in the unit ball}%

\author{Bingyang Hu and Songxiao Li$^*$}%

\address{Bingyang Hu: Department of Mathematics, University of Wisconsin, Madison, WI 53706-1388, USA.}%
\email{bhu32@wisc.edu}

\address{Songxiao Li:  Institute of Fundamental and Frontier Sciences, University of Electronic Science and Technology of China, 610054, Chengdu, Sichuan, P.R. China.}
\address{Institute System Engineering, Macau University of Science and Technology,  Avenida Wai Long, Taipa, Macau. }%
\email{jyulsx@163.com}

\subjclass[2010]{32A37, 47B38}%
\begin{abstract}   In this paper, a new characterization is provided  for the boundedness, compactness and essential norm of the difference of two weighted composition operators on weighted-type spaces in the unit ball of $\C^n$.
\thanks{*Corresponding author.}
\vskip 3mm \noindent{\it Keywords}: Weighted composition operators, difference, weighted-type space.
\thanks{This  project was partially supported by the Macao Science and Technology Development Fund (No.186/2017/A3) and NSF of China (No.11471143 and No. 11720101003).   }
\end{abstract}



\maketitle

\section{Introduction}

 Let $\B$ be the open unit ball of $\C^n$ and $\partial \B$ the boundary of $\B$. For $a \in \B\backslash\{0\}$, the automorphism of $\B$ is defined by
$$
\Phi_a(z)=\frac{a-P_az-s_aQ_az}{1-\langle z, a \rangle}, \ z \in \B,
$$
where $s_a=\sqrt{1-|a|^2}$,
$$
P_a z=\frac{\langle z, a \rangle}{\langle a, a \rangle}a, \quad \textrm{and} \quad Q_az=z-P_az, \ z \in \B.
$$
When $a=0$, $\Phi_a(z)=-z$. For $z, w \in \B$, the  pseudo-hyperbolic distance  between $z$ and $w$ is given by
$$
\rho(z, w)=|\Phi_w(z)|.
$$
It is clear that $\rho(z, w) \le 1$, moreover, it is invariant under automorphism, that is,
$$
\rho(\phi(z), \phi(w))=\rho(z, w),
$$
for all $z, w \in\B$ and $\phi \in Aut(\B)$.

 Let   $H(\B)$ be the   space of all holomorphic functions on $\B$.  Let $\alpha>0$. An $f \in H(\B)$ is said to belong to the weighted-type space, denoted by $H_\alpha^\infty$, if
$$
\|f\|_{\alpha}:=\sup_{z \in \B} (1-|z|^2)^\alpha |f(z)|<\infty.
$$
It is well known that  $H_\alpha^\infty$ is a Banach space under the norm $\| \cdot\|_{\alpha}$.

Let $\varphi$ be a  holomorphic self-map of $\B$ and   $u \in H(\B)$. The  weighted composition operator $uC_\varphi: H(\B) \mapsto H(\B)$ is defined by
$$
uC_\varphi(f)(z)=u(z)  f ( \varphi(z) ) , \quad f \in H(\B), z \in \B.
$$
  Observe that $uC_\varphi(f)=M_u \circ C_\varphi (f)$, where $M_u(f)=uf$ is the  multiplication operator with symbol $u$ and $C_\varphi(f)=f \circ \varphi$ is the composition operator with symbol $\varphi$.

The boundedness and compactness of the operator $uC_\varphi$  are always important in the study of such operators (see, e.g.,  \cite{cm}). Recently, it is known that such properties can be merely captured by polynomials. More precisely, for a operator $uC_\varphi$ from $ \XX $ into $ \YY$, where $\XX$ and $\YY$ are some ``nice" analytic function spaces that are defined on the unit disc $\D$ (or $\B$, respectively),
\begin{enumerate}
\item[1.] $uC_\varphi: \XX \mapsto \YY$ is bounded if and only if
$$
\sup_{j\in \N} \frac{\|u \varphi^j\|_\YY}{\|z^j\|_\XX}<\infty \ \left(\textrm{or} \ \sup_{j \in \N}\sup_{\xi \in \partial \B} \frac{\|u\langle \varphi, \xi \rangle^j\|_\YY}{\|\langle z, \xi \rangle^j\|_\XX}<\infty, \ \textrm{respectively} \  \right);
$$
\item [2.] $uC_\varphi: \XX \mapsto \YY$ is compact if and only if $uC_\varphi: \XX \mapsto \YY$ is bounded and
$$
\limsup_{j \to \infty} \frac{\|u \varphi^j\|_\YY}{\|z^j\|_\XX}=0 \ \left(\textrm{or} \ \limsup_{j \to \infty}\sup_{\xi \in \partial \B} \frac{\|u\langle \varphi, \xi \rangle^j\|_\YY}{\|\langle z, \xi \rangle^j\|_\XX}=0, \ \textrm{respectively} \  \right).
$$
\end{enumerate}
Such a phenomenon was first found by Wulan, Zheng and Zhu in \cite{wzz} in the setting of the unit disk. They
  showed that $C_\varphi$ is compact on $\mathcal{B}(\D)$ if and only if $ \lim\limits_{j\to\infty} \|\varphi^j\|_{\mathcal{B}(\D)}=0$. Here $\mathcal{B}(\D)$ is the Bloch space, which consists of all analytic functions $f$ on $\D$ satisfying
 $\sup\limits_{z \in \D}(1-|z|^2) |f'(z)|<\infty. \nonumber$  In \cite{dai}, Dai  extended  the main result in \cite{wzz} to the unit ball. He showed that $C_\varphi$ is compact on $\mathcal{B}(\B)$ if and only if $ \lim\limits_{j\to\infty}\sup\limits_{\xi \in \partial \B} \|\langle \varphi, \xi \rangle^j \|_{\mathcal{B}(\B)}=0$.
 See \cite{co, dai, HLS, hlw, ll, SL, SL2, wzz, zhao}  for more results on such characterization of composition operators and weighted composition operators on some analytic function spaces.

Recently, the difference of composition operators (as well as the weighted composition operators) draws great attentions of lots of researchers, as it can be used to study the topological structure of the set of composition operators (as well as the weighted composition operators). For example, given $uC_\varphi$ and $vC_\psi$ two bounded operators acting from $\XX$ to $\YY$ as above, one may ask whether $uC_\varphi$ and $vC_\psi$ are in the same path component in $\calW(\XX, \YY)$,  the set of all  bounded weighted composition operators between $\XX$ and $\YY$, equipped with the topology induced by operator norm. More precisely, we are interested in that whether there exists a continuous mapping $\gamma: [0, 1] \mapsto \calW(\XX, \YY)$, such that $\gamma(0)=uC_\varphi$ and $\gamma(1)=vC_\psi$. In general, this is a hard question and it turns out that one should first understand the behavior of the difference of two weighted composition operators.

The line of this research was first started  by Berkson in \cite{Be}. In \cite{SL}, Shi and Li obtained several estimates for the essential norm of the difference of composition operators on $\mathcal{B}(\D)$.  Among others, they showed that
$$\|C_\varphi-C_\psi\|_{e, \mathcal{B}(\D)\rightarrow \mathcal{B}(\D)}\simeq  \lim_{j\to\infty}\|\varphi^j-\psi^j\|_{\mathcal{B}(\D)}.$$
 For further results of the difference under various  settings, we refer the readers to \cite{Be, cm, DO,  HLS, li, lw, Mo, Ni, S1, S2, Sj, SL, SL2} and the references therein.

In \cite{Ni}, Nieminen  obtained a characterization of the compactness of differences of  weighted composition operators on weighted-type spaces.
Motivated by the results in \cite{wzz} and \cite{Ni}, Hu, Li and Shi gave a new characterization for  the boundedness, compactness and  essential norm of the operator $uC_{\varphi}-vC_{\psi}: H^{\infty}_{\alpha} \to H^{\infty}_{\beta}$ in the unit disk.  More precisely,  they showed that $uC_{\varphi}-vC_{\psi}: H^{\infty}_{\alpha} (\D) \to H^{\infty}_{\beta}(\D)$ is bounded (respectively, compact) if and only if the sequence $\Big(\frac{\|u\varphi^j-v\psi^j\|_{\beta }}{\|z^j\|_{\alpha }} \Big)_{j=0}^\infty$ is bounded (respectively, convergent to 0 as $j\to\infty$).

 In this paper, we study the difference of two weighted composition operators between different weighted-type spaces in the unit ball, namely, the operator $uC_\varphi-vC_\psi: H^{\infty}_{\alpha} \to H^{\infty}_{\beta}$, where $u, v \in H(\B)$ and $\varphi, \psi$ are two holomorphic self-maps of $\B$. We characterize the boundedness, compactness and essential norm of the operator $uC_\varphi-vC_\psi$ by using $$
  \sup_{\xi \in \partial \B} \frac{\|u\langle \varphi, \xi \rangle^j-v\langle \psi, \xi \rangle^j\|_\beta}{\|\langle z, \xi \rangle^j\|_\alpha} $$
and
$$ \sup_{\xi, \xi' \in \partial \B}\frac{\|u\langle \varphi, \xi \rangle^j\langle \varphi, \xi' \rangle-v\langle \psi, \xi \rangle^j\langle \psi, \xi' \rangle\|_\beta}{\|\langle z, \xi \rangle^j\langle z, \xi' \rangle \|_\alpha}.$$
 For a non-polynomial description, we refer the readers to the paper \cite{DO} for details.

Throughout this paper, for $a, b \in \R$, $a \lesssim b$ ($a \gtrsim b$, respectively) means there exists a positive number $C$, which is independent of $a$ and $b$, such that $a \leq Cb$ ($ a \geq Cb$, respectively). Moreover, if both $a \lesssim b$ and $a \gtrsim b$ hold, then we say $a \simeq b$.
\bigskip

\section{Boundedness of $uC_\varphi-vC_\psi: H_\alpha^\infty \mapsto H_\beta^\infty$. }

In this section, we characterize the boundedness of the difference of weighted composition operators from $H^\infty_\alpha$ to $H^\infty_\beta$. For all $z, w \in \B$, define
$$
\flat_\alpha(z, w)=\sup_{\|f\|_{H_\alpha^\infty} \le 1} \left| (1-|z|^2)^\alpha f(z)-(1-|w|^2)^\alpha f(w) \right|.
$$
Let $\varphi$ and $\psi$ be holomorphic self-maps of $\B$, $u, v \in H(\B)$. We denote
$$
\calD_{u, \varphi}(z)=\frac{(1-|z|^2)^\beta u(z)}{(1-|\varphi(z)|^2)^\alpha},  \ \calD_{v, \psi}(z)=\frac{(1-|z|^2)^\beta v(z)}{(1-|\psi(z)|^2)^\alpha}.
$$
Moreover, for each $a \in \B$,  we define the following families of test functions on $\B$:
$$
f_a(z)=\frac{(1-|a|^2)^\alpha}{(1-\langle z, a\rangle)^{2\alpha}},
$$
\[ g_{\varphi, \psi, a}(z)=\begin{cases}
f_{\varphi(a)}(z) \cdot \frac{\langle \Phi_{\varphi(a)}(z), \Phi_{\varphi(a)}(\psi(a)) \rangle}{|\Phi_{\varphi(a)}(\psi(a))|}, & \Phi_{\varphi(a)}(\psi(a)) \neq 0; \\
0, & \Phi_{\varphi(a)}(\psi(a))=0,
\end{cases} \]
and
\[ g_{\psi, \varphi, a}(z)=\begin{cases}
f_{\psi(a)}(z) \cdot \frac{\langle \Phi_{\psi(a)}(z), \Phi_{\psi(a)}(\varphi(a)) \rangle}{|\Phi_{\psi(a)}(\varphi(a))|}, & \Phi_{\psi(a)}(\varphi(a)) \neq 0;\\
0, &  \Phi_{\psi(a)}(\varphi(a))=0.
\end{cases} \]
It is clear that $f_a$, $g_{\varphi, \psi, a}$ and $g_{\psi, \varphi, a}$ are holomorphic in $\B$ with $g_{\varphi, \psi, a}(\varphi(a))=0$ and $g_{\psi, \varphi, a}(\psi(a))=0$. Moreover, we have
$$
\sup_{a\in \B} \|f_a\|_\alpha\le 1, ~~~\sup_{a\in \B} \|g_{\varphi, \psi, a}\|_\alpha\le 1, ~~~\sup_{a\in \B} \|g_{\psi, \varphi, a}\|_\alpha \le 1.
$$

To state and prove our main results in this paper, we need some lemmas. The following well-known estimate can be found in \cite[Lemma 3.2]{DO}.\msk

\begin{lem} \label{lem01}
For $f \in H_\alpha^\infty$ and $z, w \in \B$,
$$
\left| (1-|z|^2)^\alpha f(z)-(1-|w|^2)^\alpha f(w) \right| \lesssim \|f\|_{\alpha} \rho(z, w).
$$
\end{lem}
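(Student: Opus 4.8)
The plan is to reduce to the case $w=0$ by exploiting the Möbius invariance of $H_\alpha^\infty$, and then to settle the base case by an elementary two-region estimate.

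First I would record the Jacobian identity for automorphisms of $\B$,
$$
1-|\Phi_a(z)|^2 = \frac{(1-|a|^2)(1-|z|^2)}{|1-\langle z,a\rangle|^2}, \qquad a,z\in\B,
$$
and use it to show that, for fixed $a\in\B$, the weighted composition operator $U_a g(z)=g(\Phi_a(z))\,f_a(z)$ (with $f_a$ the test function already defined in the text) is an isometry of $H_\alpha^\infty$. Indeed, the identity gives
$$
(1-|z|^2)^\alpha |U_a g(z)| = (1-|\Phi_a(z)|^2)^\alpha |g(\Phi_a(z))|,
$$
so that $\|U_a g\|_\alpha=\|g\|_\alpha$ because $\Phi_a$ is an involutive automorphism of $\B$.

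Next, writing $F(p)=(1-|p|^2)^\alpha f(p)$, I would set $a=w$, put $\zeta=\Phi_w(z)$ (so that $|\zeta|=\rho(z,w)$), and let $h=U_w f$. Evaluating the displayed identity at $p=0$ and $p=\zeta$ and using $\Phi_w(0)=w$ together with $\Phi_w(\zeta)=z$ (involution) yields
$$
(1-|\zeta|^2)^\alpha h(\zeta) - h(0) = F(z) - F(w).
$$
Since $\|h\|_\alpha=\|f\|_\alpha$ and $|\zeta|=\rho(z,w)$, it then suffices to prove the base case: for every $g\in H_\alpha^\infty$ and $\zeta\in\B$,
$$
\left| (1-|\zeta|^2)^\alpha g(\zeta) - g(0) \right| \lesssim \|g\|_\alpha\, |\zeta|.
$$

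Finally I would prove this base case by splitting on $|\zeta|$. When $|\zeta|\ge\tfrac12$ the triangle inequality alone suffices, since $(1-|\zeta|^2)^\alpha|g(\zeta)|\le\|g\|_\alpha$ and $|g(0)|\le\|g\|_\alpha$ give a bound $2\|g\|_\alpha\le 4\|g\|_\alpha|\zeta|$. When $|\zeta|<\tfrac12$ I would note that $|g(p)|\le\|g\|_\alpha/(1-|p|^2)^\alpha$ keeps $g$ bounded by $C\|g\|_\alpha$ on $\{|p|\le\tfrac34\}$, so Cauchy's estimate controls $\nabla g$ on $\{|p|\le\tfrac12\}$; combined with the boundedness of $(1-|p|^2)^\alpha$ and of its gradient there, the mean value theorem applied to the real segment from $0$ to $\zeta$ gives $|(1-|\zeta|^2)^\alpha g(\zeta)-g(0)|\le C\|g\|_\alpha|\zeta|$. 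The main obstacle is the first step: verifying that $U_w$ is an isometry and that the substitution correctly lines up $F(z)$ and $F(w)$ with $h(\zeta)$ and $h(0)$, which is precisely where the Möbius identity does all the work; once the problem is transported to $w=0$, the remaining estimate is routine. (Alternatively one may avoid the case split by decomposing $(1-|\zeta|^2)^\alpha g(\zeta)-g(0)=[(1-|\zeta|^2)^\alpha-1]g(0)+(1-|\zeta|^2)^\alpha[g(\zeta)-g(0)]$ and using $|1-(1-|\zeta|^2)^\alpha|\lesssim|\zeta|$, but this demands a slightly more delicate Schwarz-type bound on $g(\zeta)-g(0)$.)
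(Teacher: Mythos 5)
Your overall strategy (transporting the problem to the origin via the weighted composition $U_w$, then a two-region estimate at the base point) is sound, and both your isometry computation and your base case are correct. But there is a genuine gap at the pivotal reduction step: the claimed identity
$$
(1-|\zeta|^2)^\alpha h(\zeta)-h(0)=F(z)-F(w)
$$
is false in general. The M\"obius identity you invoke is a statement about \emph{moduli} only: $(1-|z|^2)^\alpha f_w(z)$ equals $(1-|\Phi_w(z)|^2)^\alpha$ only after taking absolute values, because $f_w$ has denominator $(1-\langle z,w\rangle)^{2\alpha}$, not $|1-\langle z,w\rangle|^{2\alpha}$. Computing honestly at $\zeta=\Phi_w(z)$ one finds
$$
(1-|\zeta|^2)^\alpha h(\zeta)=\mu\,F(z),\qquad
\mu=\Bigl(\tfrac{\overline{1-\langle \zeta,w\rangle}}{1-\langle \zeta,w\rangle}\Bigr)^{\alpha}
=e^{-2i\alpha\arg(1-\langle\zeta,w\rangle)},
$$
a unimodular factor which in general is $\neq 1$ when $\langle\zeta,w\rangle\notin\R$ (whereas $h(0)=F(w)$ does hold exactly). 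So your reduction plus the base case proves $|\mu F(z)-F(w)|\lesssim\|f\|_\alpha\,\rho(z,w)$, which by itself only yields the weaker inequality $\bigl|\,|F(z)|-|F(w)|\,\bigr|\lesssim\|f\|_\alpha\,\rho(z,w)$, not the lemma: the whole content of the statement is about the difference of the weighted values, not of their moduli.

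The gap is repairable, and the repair is precisely the step your write-up skips: estimate the phase. Since $|\langle\zeta,w\rangle|\le|\zeta|\,|w|\le|\zeta|$, the point $1-\langle\zeta,w\rangle$ lies in the disc of radius $|\zeta|$ centered at $1$, so $|\arg(1-\langle\zeta,w\rangle)|\le\arcsin|\zeta|\le\tfrac{\pi}{2}|\zeta|$, and therefore
$$
|F(z)-\mu F(z)|=|F(z)|\,|1-\mu|\le \|f\|_\alpha\cdot 2\alpha\,|\arg(1-\langle\zeta,w\rangle)|
\le \pi\alpha\,\|f\|_\alpha\,|\zeta|=\pi\alpha\,\|f\|_\alpha\,\rho(z,w),
$$
using $|1-e^{i\theta}|\le|\theta|$; the constant depends only on $\alpha$, which is allowed. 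Adding this to $|\mu F(z)-F(w)|\lesssim\|f\|_\alpha\,\rho(z,w)$ recovers the lemma. With this one extra estimate your argument is complete and self-contained. For comparison: the paper itself gives no proof of this lemma, quoting it as well known from \cite[Lemma 3.2]{DO}, so your (patched) argument would actually supply a proof the paper omits.
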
\msk

\begin{lem} \label{lem02}
Let $0<\alpha, \beta<\infty, u, v \in H(\B)$. Let further, $\varphi$ and $\psi$ be holomorphic self-maps of $\B$. Then the following inequalities hold:
\begin{enumerate}
\item[(i).] For each $a \in \B$,
\begin{eqnarray*}
|\calD_{u, \varphi}(a)| \rho(\varphi(a), \psi(a))
 \le  \|(uC_\varphi-vC_\psi)f_{\varphi(a)}\|_{\beta}  +\|(uC_\varphi-vC_\psi)g_{\varphi, \psi, a}\|_\beta;
\end{eqnarray*}
\item[(ii).] For each $a \in \B$,
\begin{eqnarray*}
|\calD_{v, \psi}(a)| \rho(\varphi(a), \psi(a)) \le  \|(uC_\varphi-vC_\psi)f_{\psi(a)}\|_{\beta} +\|(uC_\varphi-vC_\psi)g_{\psi, \varphi, a}\|_\beta.
\end{eqnarray*}
\item[(iii). ] For each $a \in \B$,
\begin{eqnarray*}
|\calD_{u, \varphi}(a)-\calD_{v, \psi}(a)|
&\lesssim& \|(uC_\varphi-vC_\psi)f_{\varphi(a)}\|_{\beta} +\|(uC_\varphi-vC_\psi)f_{\psi(a)}\|_\beta  \\
&& + \min\{ \|(uC_\varphi-vC_\psi) g_{\varphi, \psi, a}\|_{\beta}, \|(uC_\varphi-vC_\psi)g_{\psi, \varphi, a}\|_\beta\}.
\end{eqnarray*}
\end{enumerate}
\end{lem}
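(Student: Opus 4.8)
The plan is to prove parts (i) and (ii) by testing $uC_\varphi-vC_\psi$ against the pair of functions indexed by $a$ at the single point $z=a$, forming a linear combination that cancels the unwanted cross term, and then to deduce (iii) from (i), (ii) and Lemma~\ref{lem01}. Throughout I write $\rho:=\rho(\varphi(a),\psi(a))=|\Phi_{\varphi(a)}(\psi(a))|$, so that the case $\rho=0$ makes (i) and (ii) trivial; hence I may assume $\rho\neq0$ (equivalently $\Phi_{\varphi(a)}(\psi(a))\neq0$, so that the nontrivial branch of $g_{\varphi,\psi,a}$ is in force).

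First I would record the point values that drive the cancellation. A direct computation gives $f_{\varphi(a)}(\varphi(a))=(1-|\varphi(a)|^2)^{-\alpha}$, and since $\Phi_{\varphi(a)}(\varphi(a))=0$ one has $g_{\varphi,\psi,a}(\varphi(a))=0$, while
$$g_{\varphi,\psi,a}(\psi(a))=f_{\varphi(a)}(\psi(a))\,\frac{|\Phi_{\varphi(a)}(\psi(a))|^2}{|\Phi_{\varphi(a)}(\psi(a))|}=f_{\varphi(a)}(\psi(a))\,\rho.$$
Evaluating the two test images at $z=a$ then gives
$$(uC_\varphi-vC_\psi)f_{\varphi(a)}(a)=\frac{u(a)}{(1-|\varphi(a)|^2)^\alpha}-v(a)f_{\varphi(a)}(\psi(a)),\qquad (uC_\varphi-vC_\psi)g_{\varphi,\psi,a}(a)=-v(a)f_{\varphi(a)}(\psi(a))\,\rho.$$
Forming the combination $\rho\,(uC_\varphi-vC_\psi)f_{\varphi(a)}(a)-(uC_\varphi-vC_\psi)g_{\varphi,\psi,a}(a)$ cancels the two terms carrying $v(a)f_{\varphi(a)}(\psi(a))$ and leaves exactly $\rho\,u(a)(1-|\varphi(a)|^2)^{-\alpha}$. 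Multiplying by $(1-|a|^2)^\beta$ turns the left-hand side into $\calD_{u,\varphi}(a)\,\rho$; taking absolute values, discarding the factor $\rho\le1$ in front of the first summand, and bounding each point evaluation $(1-|a|^2)^\beta|(\,\cdot\,)(a)|$ by the corresponding $\|\cdot\|_\beta$-norm yields (i). Part (ii) is obtained verbatim after interchanging $(u,\varphi)$ with $(v,\psi)$, i.e.\ using $f_{\psi(a)}$ and $g_{\psi,\varphi,a}$, whose relevant values are $g_{\psi,\varphi,a}(\psi(a))=0$ and $g_{\psi,\varphi,a}(\varphi(a))=f_{\psi(a)}(\varphi(a))\,\rho$.

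For (iii) I would interpolate through the value $f_{\varphi(a)}(\psi(a))$. Writing
$$\frac{u(a)}{(1-|\varphi(a)|^2)^\alpha}-\frac{v(a)}{(1-|\psi(a)|^2)^\alpha}=(uC_\varphi-vC_\psi)f_{\varphi(a)}(a)+v(a)\Big(f_{\varphi(a)}(\psi(a))-\tfrac{1}{(1-|\psi(a)|^2)^\alpha}\Big),$$
I apply Lemma~\ref{lem01} to $f=f_{\varphi(a)}$ at the points $\psi(a)$ and $\varphi(a)$; since $\|f_{\varphi(a)}\|_\alpha\le1$ and $(1-|\varphi(a)|^2)^\alpha f_{\varphi(a)}(\varphi(a))=1$, this gives $|(1-|\psi(a)|^2)^\alpha f_{\varphi(a)}(\psi(a))-1|\lesssim\rho$, whence the parenthetical term is $\lesssim\rho\,(1-|\psi(a)|^2)^{-\alpha}$. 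Multiplying the displayed identity by $(1-|a|^2)^\beta$ then yields
$$|\calD_{u,\varphi}(a)-\calD_{v,\psi}(a)|\lesssim\|(uC_\varphi-vC_\psi)f_{\varphi(a)}\|_\beta+|\calD_{v,\psi}(a)|\,\rho,$$
and the last factor is controlled by (ii). This produces the bound featuring the $g_{\psi,\varphi,a}$-term; running the same argument with the roles of $(u,\varphi)$ and $(v,\psi)$ exchanged (interpolating through $f_{\psi(a)}(\varphi(a))$ and invoking (i)) produces the bound featuring the $g_{\varphi,\psi,a}$-term, and taking the smaller of the two gives the minimum in (iii). The main obstacle is purely algebraic: spotting the linear combination in (i)/(ii) that cancels the cross term $v(a)f_{\varphi(a)}(\psi(a))$ (respectively $u(a)f_{\psi(a)}(\varphi(a))$) while retaining the factor $\rho$; once that cancellation is in place, the rest reduces to single-point evaluation estimates and a routine use of Lemma~\ref{lem01}, and the minimum in (iii) is merely a reflection of the symmetry between the two halves of the argument.
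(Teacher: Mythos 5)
Your proposal is correct and follows essentially the same route as the paper's proof: both evaluate the test functions $f_{\varphi(a)}$ and $g_{\varphi,\psi,a}$ (resp.\ their symmetric counterparts) at the single point $z=a$, exploit $g_{\varphi,\psi,a}(\varphi(a))=0$ and $g_{\varphi,\psi,a}(\psi(a))=f_{\varphi(a)}(\psi(a))\,\rho(\varphi(a),\psi(a))$ to cancel the cross term (using $\rho\le 1$ at the end), and obtain (iii) by combining the point evaluation of $f_{\varphi(a)}$ with Lemma~\ref{lem01} and then invoking (ii) (resp.\ (i)), with the minimum coming from the symmetry of the two runs. The only difference is presentational: you write out explicitly the point-evaluation identities that the paper compresses into ``a simple calculation.''
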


\begin{proof}
The idea of the proof follows from  \cite[Lemma 2.1]{HLS}.

(i). We only consider the case that $\Phi_{\varphi(a)}(\psi(a)) \neq 0$. Otherwise,
$$
|\calD_{u, \varphi}(a)| \rho(\varphi(a), \psi(a))|=0
$$
and hence there is nothing to prove. Then, by a simple calculation, we have
$$
\|(uC_\varphi-vC_\psi)f_{\varphi(a)}\|_\beta \ge |\calD_{u, \varphi}(a)|-\frac{(1-|\varphi(a)|^2)^\alpha(1-|\psi(a)|^2)^\alpha}{|1-\langle \psi(a), \varphi(a) \rangle|^{2\alpha}}|\calD_{v, \psi}(a)|
$$
and
$$
\|(uC_\varphi-vC_\psi)g_{\varphi, \psi, a}\|_\beta \ge \frac{(1-|\varphi(a)|^2)^\alpha (1-|\psi(a)|^2)^\alpha}{|1-\langle \psi(a), \varphi(a) \rangle|^{2\alpha}} |\calD_{v, \psi}(a)| \rho(\varphi(a), \psi(a)).
$$
Hence
\begin{eqnarray*}
&& |\calD_{u, \varphi}(a)| \rho(\varphi(a), \psi(a)) \\
&\le& \|(uC_\varphi-vC_\psi)f_{\varphi(a)}\|_\beta \cdot \rho(\varphi(a), \psi(a))    +\|(uC_\varphi-vC_\psi)g_{\varphi, \psi, a}\|_\beta \\
&\le& \|(uC_\varphi-vC_\psi)f_{\varphi(a)}\|_{\beta}  +\|(uC_\varphi-vC_\psi)g_{\varphi, \psi, a}\|_\beta.
\end{eqnarray*}

\medskip

(ii). The proof of (ii) is similar as (i), by interchanging the role of $\varphi$ and $\psi$, and hence we omit it here.

\medskip

(iii). By Lemma \ref{lem01}, we have
\begr
&&\|(uC_\varphi-vC_\psi)f_{\varphi(a)}\|_\beta \ge \left |\calD_{u, \varphi}(a) - \frac{(1-|\varphi(a)|^2)^\alpha(1-|\psi(a)|^2)^\alpha}{|1-\langle \psi(a), \varphi(a) \rangle|^{2\alpha}}\calD_{v, \psi}(a)\right| \nonumber \\
&\ge& |\calD_{u, \varphi}(a)-\calD_{v, \psi}(a)|-\left|1-  \frac{(1-|\varphi(a)|^2)^\alpha(1-|\psi(a)|^2)^\alpha}{|1-\langle \psi(a), \varphi(a) \rangle|^{2\alpha}}\right| |\calD_{v, \psi}(a)|\nonumber \\
&= & |\calD_{u, \varphi}(a)-\calD_{v, \psi}(a)|-\big| (1-|\varphi(a)|^2)^\alpha f_{\varphi(a)}(\varphi(a))\nonumber\\
&& \quad \quad \quad \quad \quad \quad \quad \quad \quad \quad  \quad \quad -(1-|\psi(a)|^2)^\alpha f_{\varphi(a)}(\psi(a)) \big | |\calD_{v, \psi}(a)| \nonumber\\
&\ge& |\calD_{u, \varphi}(a)-\calD_{v, \psi}(a)|-\flat_\alpha(\varphi(a), \psi(a)) |\calD_{v, \psi}(a)| \nonumber\\
&\gtrsim & |\calD_{u, \varphi}(a)-\calD_{v, \psi}(a)|-|\calD_{v, \psi}(a)|\rho(\varphi(a), \psi(a)).\nonumber
\endr
Thus, by (ii), we have
\begin{eqnarray*}
 |\calD_{u, \varphi}(a)-\calD_{v, \psi}(a)|%
&\lesssim&\|(uC_\varphi-vC_\psi)f_{\varphi(a)}\|_\beta+ |\calD_{v, \psi}(a)|\rho(\varphi(a), \psi(a)) \\
&\le&  \|(uC_\varphi-vC_\psi)f_{\varphi(a)}\|_{\beta} + \|(uC_\varphi-vC_\psi)f_{\psi(a)}\|_{\beta}  \\
&& \quad +\|(uC_\varphi-vC_\psi)g_{\psi, \varphi, a}\|_\beta.
\end{eqnarray*}
Interchanging the role of $\varphi$ and $\psi$, it is easy to see that
\begin{eqnarray*}
 |\calD_{u, \varphi}(a)-\calD_{v, \psi}(a)|%
&\le&  \|(uC_\varphi-vC_\psi)f_{\varphi(a)}\|_{\beta} + \|(uC_\varphi-vC_\psi)f_{\psi(a)}\|_{\beta}  \\
&& \quad +\|(uC_\varphi-vC_\psi)g_{\varphi, \psi, a}\|_\beta.
\end{eqnarray*}
Thus, we have
\begin{eqnarray*}
 |\calD_{u, \varphi}(a)-\calD_{v, \psi}(a)|%
&\le&  \|(uC_\varphi-vC_\psi)f_{\varphi(a)}\|_{\beta} + \|(uC_\varphi-vC_\psi)f_{\psi(a)}\|_{\beta}  \\
&& \quad +\min\{\|(uC_\varphi-vC_\psi)g_{\varphi, \psi, a}\|_\beta, \|(uC_\varphi-vC_\psi)g_{\psi, \varphi, a}\|_\beta\}.
\end{eqnarray*}
The proof is complete.
\end{proof}

Next, we introduce the following condition with respect to $\varphi$ and $\psi$: there exists a $C>0$, such that
\begin{equation} \label{eq100}
\inf_{a \in \B}  \frac{1-|\varphi(a)|^2}{|1-\langle \varphi(a), \xi_a \rangle|}>C,
\end{equation}
where $\xi_a=\frac{\Phi_{\varphi(a)}(\psi(a))}{|\Phi_{\varphi(a)}(\psi(a))|}$. We also need its dual version,  namely, there exists a $C>0$, such that
\begin{equation} \label{eq200}
\inf_{a \in \B} \frac{1-|\psi(a)|^2}{|1-\langle \psi(a), \zeta_a \rangle|}>C,
\end{equation}
where $\zeta_a=\frac{\Phi_{\psi(a)}(\varphi(a))}{|\Phi_{\psi(a)}(\varphi(a))|}$.

\begin{lem} \label{lem03}
Let $0<\alpha, \beta<\infty, u, v \in H(\B)$. Let further, $\varphi$ and $\psi$ be holomorphic self-maps of $\B$. Then the following inequalities hold:
\begin{enumerate}
\item[(i).]
$$
\sup_{a \in \B} \|(uC_\varphi-vC_\psi) f_a\|_\beta \lesssim \sup_{j \in \N} \sup_{\xi \in \partial \B} \frac{\|u\langle \varphi, \xi \rangle^j-v\langle \psi, \xi \rangle^j\|_\beta}{\|\langle z, \xi \rangle^j\|_\alpha};
$$
\item[(ii).] Suppose \eqref{eq100} holds, then
\begin{eqnarray*}
\sup_{a \in \B} \|(uC_\varphi-vC_\psi) g_{\varphi, \psi, a} \|_\beta%
&\lesssim& \sup_{j \in \N} \sup_{\xi \in \partial \B} \frac{\|u\langle \varphi, \xi \rangle^j-v\langle \psi, \xi \rangle^j\|_\beta}{\|\langle z, \xi \rangle^j\|_\alpha}\\
&&+ \sup_{j \in \N} \sup_{\xi, \xi' \in \partial \B}\frac{\|u\langle \varphi, \xi \rangle^j\langle \varphi, \xi' \rangle-v\langle \psi, \xi \rangle^j\langle \psi, \xi' \rangle\|_\beta}{\|\langle z, \xi \rangle^j\langle z, \xi' \rangle \|_\alpha};
\end{eqnarray*}
\item[(iii).] Suppose \eqref{eq200} holds, then
\begin{eqnarray*}
\sup_{a \in \B} \|(uC_\varphi-vC_\psi) g_{\psi, \varphi, a} \|_\beta%
&\lesssim& \sup_{j \in \N} \sup_{\xi \in \partial \B} \frac{\|u\langle \varphi, \xi \rangle^j-v\langle \psi, \xi \rangle^j\|_\beta}{\|\langle z, \xi \rangle^j\|_\alpha}\\
&&+ \sup_{j\in \N} \sup_{\xi, \xi' \in \partial \B}\frac{\|u\langle \varphi, \xi \rangle^j\langle \varphi, \xi' \rangle-v\langle \psi, \xi \rangle^j\langle \psi, \xi' \rangle\|_\beta}{\|\langle z, \xi \rangle^j\langle z, \xi' \rangle \|_\alpha}.
\end{eqnarray*}
\end{enumerate}
\end{lem}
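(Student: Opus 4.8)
The plan is to expand each test function into the ``monomials'' $\langle z,\xi\rangle^j$ and $\langle z,\xi\rangle^j\langle z,\xi'\rangle$, whose images under $uC_\varphi-vC_\psi$ are precisely what the two suprema on the right control, apply the operator termwise, and then sum the resulting numerical series. Write $M_1$ and $M_2$ for the first and second suprema on the right-hand sides. Throughout I use three elementary facts: $\|\langle z,\xi\rangle^j\|_\alpha\simeq(j+1)^{-\alpha}$ uniformly in $\xi\in\partial\B$; the binomial coefficients $c_j=\binom{2\alpha+j-1}{j}\simeq(j+1)^{2\alpha-1}$ and $d_j=\binom{2\alpha+j}{j}\simeq(j+1)^{2\alpha}$; and $\sum_{j}(j+1)^\gamma t^j\simeq(1-t)^{-\gamma-1}$ for $t\in[0,1)$, $\gamma>-1$.

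For (i), write $a=|a|\omega$ with $\omega=a/|a|\in\partial\B$ and use $(1-\langle z,a\rangle)^{-2\alpha}=\sum_j c_j|a|^j\langle z,\omega\rangle^j$ to get $(uC_\varphi-vC_\psi)f_a=(1-|a|^2)^\alpha\sum_j c_j|a|^j(u\langle\varphi,\omega\rangle^j-v\langle\psi,\omega\rangle^j)$. The triangle inequality and the definition of $M_1$ bound its $\beta$-norm by $M_1(1-|a|^2)^\alpha\sum_j c_j|a|^j\|\langle z,\omega\rangle^j\|_\alpha$. Since $c_j\|\langle z,\omega\rangle^j\|_\alpha\simeq(j+1)^{\alpha-1}$, the series is $\simeq(1-|a|)^{-\alpha}$, which the prefactor $(1-|a|^2)^\alpha\simeq(1-|a|)^\alpha$ cancels, giving $\lesssim M_1$ uniformly in $a$.

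For (ii) put $b=\varphi(a)$, $t=|b|$, $s=\sqrt{1-t^2}$, $\xi_b=b/|b|$, and let $\xi_a$ be as in \eqref{eq100}. Expanding $\Phi_b$ explicitly (equivalently, using $\langle\Phi_b(z),\Phi_b(\psi(a))\rangle=1-\frac{(1-|b|^2)(1-\langle z,\psi(a)\rangle)}{(1-\langle z,b\rangle)(1-\langle b,\psi(a)\rangle)}$), the factor $|\Phi_b(\psi(a))|$ cancels and one reaches the closed form $g_{\varphi,\psi,a}(z)=(1-t^2)^\alpha N(z)(1-t\langle z,\xi_b\rangle)^{-2\alpha-1}$, where $N(z)=tT-T(1-s)\langle z,\xi_b\rangle-s\langle z,\xi_a\rangle$ is affine, $T=\langle\xi_b,\xi_a\rangle$, and crucially $N(b)=0$. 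I then split $\langle z,\xi_a\rangle=\langle z,\xi_b\rangle+\langle z,\xi_a-\xi_b\rangle$, which decomposes $g_{\varphi,\psi,a}=g^{(1)}+g^{(2)}$: the diagonal part $g^{(1)}$ depends on $z$ only through $\langle z,\xi_b\rangle$, so its power series produces pure powers $\langle z,\xi_b\rangle^j$ controlled by $M_1$, while the correction $g^{(2)}=-(1-t^2)^\alpha s\,\langle z,\xi_a-\xi_b\rangle(1-t\langle z,\xi_b\rangle)^{-2\alpha-1}$ produces products $\langle z,\xi_b\rangle^j\langle z,\nu\rangle$ controlled by $M_2$.

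The heart of the matter, and the only place where \eqref{eq100} enters, is that the two numerical series must stay bounded as $t\to1$; the naive termwise bound diverges like $(1-t)^{-1/2}$. Condition \eqref{eq100} gives $|1-tT|=|1-\langle b,\xi_a\rangle|\lesssim 1-t^2=s^2$, from which I extract two facts. First, $tT=1-O(s^2)$ forces the coefficients of $g^{(1)}$ to telescope: the dangerous leading term $s\,d_j$ cancels, the coefficient of $\langle z,\xi_b\rangle^j$ becomes $\lesssim t^{j-2}\bigl((j+1)^{2\alpha-1}+s^2(j+1)^{2\alpha}\bigr)$ exactly as in the disc argument of \cite[Lemma 2.1]{HLS}, and pairing with $(j+1)^{-\alpha}$ and summing yields $(1-t^2)^\alpha\bigl[(1-t)^{-\alpha}+s^2(1-t)^{-\alpha-1}\bigr]\lesssim 1$, so $\|(uC_\varphi-vC_\psi)g^{(1)}\|_\beta\lesssim M_1$. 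Second, the same estimate gives $|\xi_a-\xi_b|^2=2-2\re T\lesssim s^2$, i.e.\ $|\xi_a-\xi_b|\lesssim s$; writing $\xi_a-\xi_b=|\xi_a-\xi_b|\nu$ and bounding $g^{(2)}$ termwise against $M_2$ produces $(1-t^2)^\alpha s|\xi_a-\xi_b|M_2\sum_j d_jt^j(j+1)^{-\alpha}\lesssim s|\xi_a-\xi_b|(1-t)^{-1}M_2\lesssim s^2(1-t)^{-1}M_2\lesssim M_2$. Adding the two bounds proves (ii); the range where $t$ is bounded away from $1$ is immediate since then $\sum_j d_jt^j(j+1)^{-\alpha}$ converges geometrically. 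Part (iii) is identical after interchanging $\varphi$ and $\psi$ and invoking \eqref{eq200}. I expect this last paragraph to be the main obstacle: separately, $g^{(1)}$ and $g^{(2)}$ blow up as $|\varphi(a)|\to1$, and only the cancellation from $tT\approx1$ together with the smallness $|\xi_a-\xi_b|\lesssim s$, both encoded in \eqref{eq100}, save the estimate; in the unit disc these issues are invisible because $\xi_a=\xi_b$ automatically, so $g^{(2)}=0$ and no geometric hypothesis is required.
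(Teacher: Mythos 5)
Your proposal is correct; parts (i) and (iii) match the paper's own argument, but the core of part (ii) takes a genuinely different route, so it is worth comparing the two. The paper never works with the raw definition of $\Phi_{\varphi(a)}$: setting $\xi_a'=\Phi_{\varphi(a)}(\xi_a)\in\partial\B$, it invokes the involution identity of \cite[Lemma 1.3]{zhu} to write
$$
\langle\Phi_{\varphi(a)}(z),\xi_a\rangle=1-\frac{(1-|\varphi(a)|^2)(1-\langle z,\xi_a'\rangle)}{(1-\langle z,\varphi(a)\rangle)(1-\langle\varphi(a),\xi_a'\rangle)},
\qquad
1-\langle\varphi(a),\xi_a'\rangle=\frac{1-|\varphi(a)|^2}{1-\langle\varphi(a),\xi_a\rangle},
$$
so that $g_{\varphi,\psi,a}=f_{\varphi(a)}-(J_1-J_2)$, where $J_1$ is a power series in $\langle z,\varphi(a)\rangle$ (controlled by your $M_1$) and $J_2$ is that same series times the single extra factor $\langle z,\xi_a'\rangle$ (controlled by $M_2$). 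In that decomposition, condition \eqref{eq100} is used exactly once, as the lower bound $|1-\langle\varphi(a),\xi_a'\rangle|>C$, and the identity hands the series the enlarged prefactor $(1-|\varphi(a)|^2)^{1+\alpha}$; after that, crude termwise bounds close the estimate, since $(1-|\varphi(a)|^2)^{1+\alpha}\sum_k k^{2\alpha}|\varphi(a)|^k\,k^{-\alpha}\simeq 1$ --- no cancellation between Taylor coefficients is ever needed. You instead expand $\Phi_{\varphi(a)}$ from its definition, reach the closed form $(1-t^2)^\alpha N(z)(1-t\langle z,\xi_b\rangle)^{-2\alpha-1}$, and must then do real work twice: the telescoping of the $g^{(1)}$ coefficients (from $|1-tT|\lesssim s^2$) and the geometric estimate $|\xi_a-\xi_b|\lesssim s$ (to tame $g^{(2)}$) are both genuine consequences of \eqref{eq100}, and your two summations do close --- I checked them. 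What your route buys is self-containedness (no appeal to Zhu's lemma) and an explicit geometric reading of \eqref{eq100}, namely that the direction $\xi_a$ is nearly radial at $\varphi(a)$; what the paper's route buys is brevity, since its choice of auxiliary direction $\xi_a'$ hides all the danger in a single denominator that \eqref{eq100} bounds below. Two small patches you still owe: the degenerate cases $\Phi_{\varphi(a)}(\psi(a))=0$ (where $g_{\varphi,\psi,a}\equiv 0$) and $\varphi(a)=0$ (where $\xi_b$ is undefined and $g_{\varphi,\psi,a}$ reduces to a single monomial) need the one-line separate treatment the paper gives; and your closing remark about the disc is imprecise --- in dimension one $\xi_a$ and $\xi_b$ need not coincide as unimodular scalars; rather $Q_{\varphi(a)}\equiv 0$, so $N(z)$ is automatically a function of $\langle z,\xi_b\rangle$ alone and the second family of monomials never arises.
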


\begin{proof}
(i). If $a=0$, then $f_a(z)=1$, and hence
$$
\|(uC_\varphi-vC_\psi)f_a\|_\beta=\|u-v\|_\beta \le \sup_{j \in \N} \sup_{\xi \in \partial \B}  \frac{\|u \langle \varphi, \xi \rangle^j-v\langle \psi, \xi \rangle^j\|_\beta}{\|\langle z, \xi \rangle^j\|_\alpha},
$$
where in the above inequality, we simply consider the case $j=0$ and use the fact that $\|1\|_\alpha=1$.

For any $a \in \B$ with $a \neq 0$, we have
\begin{eqnarray*}
f_a(z)%
&=&\frac{(1-|a|^2)^\alpha}{(1-\langle z, a\rangle)^{2\alpha}}=(1-|a|^2)^\alpha \sum_{k=0}^\infty \frac{\Gamma(k+2\alpha)}{k! \Gamma(2\alpha)} \langle z, a \rangle^k \\
&=&  (1-|a|^2)^\alpha \sum_{k=0}^\infty \frac{\Gamma(k+2\alpha)}{k! \Gamma(2\alpha)} |a|^k  \langle z, \frac{a}{|a|}  \rangle^k.
\end{eqnarray*}
Moreover, for each $\xi \in \partial \B$, it is easy to see
\begin{equation} \label{eq003}
\|\langle z, \xi \rangle^k\|_\alpha=\sup_{z \in \B} |\langle z, \xi \rangle|^k(1-|z|^2)^\alpha \simeq k^{-\alpha},
\end{equation}
uniformly in $\xi$. Thus,
\begin{eqnarray*}
&&\|(uC_\varphi-vC_\psi)f_a\|_\beta\\
&\le& (1-|a|^2)^\alpha \sum_{k=0}^\infty  \frac{\Gamma(k+2\alpha)}{k! \Gamma(2\alpha)} |a|^k \left\|u  \langle \varphi, \frac{a}{|a|} \rangle^k-v  \langle \psi, \frac{a}{|a|}  \rangle^k\right\|_\beta \\
&=& (1-|a|^2)^\alpha \sum_{k=0}^\infty  \frac{\Gamma(k+2\alpha)}{k! \Gamma(2\alpha)} k^{-\alpha}|a|^k k^\alpha \left\|u  \langle \varphi, \frac{a}{|a|} \rangle^k-v  \langle \psi, \frac{a}{|a|}  \rangle^k\right\|_\beta \\
&\le& (1-|a|^2)^\alpha \sum_{k=0}^\infty  \frac{\Gamma(k+2\alpha)}{k! \Gamma(2\alpha)} k^{-\alpha} |a|^k \cdot  \sup_{j \in \N} \sup_{\xi \in \partial \B} \frac{\|u\langle \varphi, \xi \rangle^j-v\langle \psi, \xi \rangle^j\|_\beta}{\|\langle z, \xi \rangle\|_\alpha} \\
&\lesssim& \sup_{j \in \N} \sup_{\xi \in \partial \B} \frac{\|u\langle \varphi, \xi \rangle^j-v\langle \psi, \xi \rangle^j\|_\beta}{\|\langle z, \xi \rangle\|_\alpha},
\end{eqnarray*}
where in the last inequality, we use the fact that
$$
\sum_{k=0}^\infty \frac{\Gamma(k+2\alpha)}{k! \Gamma(2\alpha)} k^{-\alpha}|a|^k \simeq (1-|a|^2)^{-\alpha}.
$$
The desired result follows from by talking supremum of $a$ over $\B$.

\medskip

(ii). First, we observe that for any $\xi, \xi' \in \partial \B$ and $k \in \N$,
$$
\|\langle z, \xi \rangle^k \langle z, \xi' \rangle\|_\alpha \lesssim k^{-\alpha},
$$
which implies
\begin{equation} \label{eq002}
1 \lesssim \frac{k^{-\alpha}}{\|\langle z, \xi \rangle^k \langle z, \xi' \rangle\|_\alpha}.
\end{equation}
Indeed, the above claim follows from \eqref{eq003} and the fact that $\|\langle z, \xi \rangle^k \langle z, \xi' \rangle\|_\alpha \le \|\langle z, \xi \rangle^k\|_\alpha$.

Take and fix some $a \in \B$. Again, without the loss of generality, we may assume that $\Phi_{\varphi(a)}(\psi(a)) \neq 0$, otherwise the statement is trivial.

We may also assume that $\varphi(a) \neq 0$. Indeed, if $\varphi(a)=0$, we have $g_{\varphi, \psi, a}(z)=-\langle z, \xi_a\rangle$, where $\xi_a=\frac{\Phi_{\varphi(a)}(\psi(a))}{|\Phi_{\varphi(a)}(\psi(a))|} \in \partial \B$. Then it is clear that
\begin{eqnarray*}
\|(uC_\varphi-vC_\psi)g_{\varphi, \psi, a}\|_\beta &=&\|u\langle \varphi, \xi_a \rangle-v\langle \psi, \xi_a \rangle\|_\beta \\
&\lesssim& \sup_{j \in \N} \sup_{\xi \in \partial \B} \frac{\|u\langle \varphi, \xi \rangle^j-v\langle \psi, \xi \rangle^j\|_\beta}{\|\langle z, \xi \rangle^j\|_\alpha} .
\end{eqnarray*}
By \cite[Lemma 1.3]{zhu}, we have
\begin{eqnarray*}
\frac{\langle \Phi_{\varphi(a)}(z), \Phi_{\varphi(a)}(\psi(a)) \rangle}{|\Phi_{\varphi(a)}(\psi(a))|}%
&=&\langle \Phi_{\varphi(a)}(z), \xi_a \rangle \\
&=& 1+ \left(\langle \Phi_{\varphi(a)}(z), \xi_a \rangle-1\right) \\
&=& 1- \left( 1-\langle \Phi_{\varphi(a)}(z), \Phi_{\varphi(a)} (\Phi_{\varphi(a)} (\xi_a)) \rangle \right) \\
&=& 1- \frac{(1-|\varphi(a)|^2)(1-\langle z, \xi_a' \rangle)}{(1-\langle z, \varphi(a) \rangle)(1-\langle \varphi(a),  \xi_a' \rangle)},
\end{eqnarray*}
where $\xi_a': = \Phi_{\varphi(a)} (\xi_a) \in \partial \B$. Note that another application of \cite[Lemma 1.3]{zhu} gives
$$
1-\langle \varphi(a), \xi'_a \rangle= \frac{1-|\varphi(a)|^2}{1-\langle \varphi(a), \xi_a \rangle}.
$$

Thus, for each $z \in \B$, we have
\begin{eqnarray*}
g_{\varphi, \psi, a}(z)%
&=& f_{\varphi(a)}(z) \cdot \frac{\langle \Phi_{\varphi(a)}(z), \Phi_{\varphi(a)}(\psi(a)) \rangle}{|\Phi_\varphi(a)(\psi(a))|} \\
&=& f_{\varphi(a)}(z)- f_{\varphi(a)}(z) \cdot  \frac{(1-|\varphi(a)|^2)(1-\langle z, \xi_a' \rangle)}{(1-\langle z, \varphi(a) \rangle)(1-\langle \varphi(a),  \xi_a' \rangle)},
\end{eqnarray*}
which implies
\begin{eqnarray*}
\|(uC_\varphi-vC_\psi)g_{\varphi, \psi, a}\|_\beta
&\le& \|(uC_\varphi-vC_\psi)(f_{\varphi(a)})\|_\beta\\
&& + \left\|(uC_\varphi-vC_\psi)\left(f_{\varphi(a)} \cdot  \frac{(1-|\varphi(a)|^2)(1-\langle z, \xi_a' \rangle)}{(1-\langle z, \varphi(a) \rangle)(1-\langle \varphi(a),  \xi_a' \rangle)}\right)\right\|_\beta
\end{eqnarray*}
By part (i), the first term is bounded by
$$
\sup_{j\in \N} \sup_{\xi \in \partial \B}  \frac{\|u\langle \varphi, \xi \rangle^j-v\langle \psi, \xi \rangle^j\|_\beta}{\|\langle z, \xi \rangle\|_\alpha}.
$$
To bound the second term, we first note that
\begin{eqnarray*}
 &&f_{\varphi(a)}(z) \cdot  \frac{(1-|\varphi(a)|^2)(1-\langle z, \xi_a' \rangle)}{(1-\langle z, \varphi(a) \rangle)(1-\langle \varphi(a),  \xi_a' \rangle)}\\
&= & \left[(1-|\varphi(a)|^2)^{\alpha} \sum_{k=0}^\infty \frac{\Gamma(k+2\alpha)}{k!\Gamma(2\alpha)} \langle z, \varphi(a) \rangle^k\right] \cdot \left[ \frac{1-|\varphi(a)|^2}{1-\langle \varphi(a), \xi_a' \rangle} \cdot (1-\langle z, \xi'_a \rangle ) \cdot \sum_{k=0}^\infty \langle z, \varphi(a) \rangle^k\right] \\
&= &J_1(z)-J_2(z),
\end{eqnarray*}
where both $J_1(z)$ and $J_2(z)$ are holomorphic on $\B$, defined by
\begin{eqnarray*}
J_1(z)%
&:=& \frac{(1-|\varphi(a)|^2)^{1+\alpha}}{1-\langle \varphi(a), \xi_a' \rangle} \left[ \sum_{k=0}^\infty \frac{\Gamma(k+2\alpha)}{k!\Gamma(2\alpha)} \langle z, \varphi(a) \rangle^k\right] \cdot \left[ \sum_{k=0}^\infty \langle z, \varphi(a) \rangle^k\right] \\
&=&   \frac{(1-|\varphi(a)|^2)^{1+\alpha}}{1-\langle \varphi(a), \xi_a' \rangle} \cdot \sum_{k=0}^\infty \left( \sum_{j=0}^k \frac{\Gamma(j+2\alpha)}{j! \Gamma(2\alpha)} \right) \langle z, \varphi(a) \rangle^k
\end{eqnarray*}
and
$$
J_2(z):= \frac{(1-|\varphi(a)|^2)^{1+\alpha}}{1-\langle \varphi(a), \xi_a' \rangle} \cdot \sum_{k=0}^\infty \left( \sum_{j=0}^k \frac{\Gamma(j+2\alpha)}{j! \Gamma(2\alpha)} \right) \langle z, \varphi(a) \rangle^k \langle z, \xi'_a \rangle.
$$
Hence,
\begin{eqnarray*}
&& \left\|(uC_\varphi-vC_\psi)\left(f_{\varphi(a)} \cdot  \frac{(1-|\varphi(a)|^2)(1-\langle z, \xi_a' \rangle)}{(1-\langle z, \varphi(a) \rangle)(1-\langle \varphi(a),  \xi_a' \rangle)}\right)\right\|_\beta\nonumber\\
&\le& \|(uC_\varphi-vC_\psi)J_1\|_\beta  +\|(uC_\varphi-vC_\psi)J_2\|_\beta
\end{eqnarray*}

\medskip
 By Stirling's formula, we have
$$
 \sum_{j=0}^k \frac{\Gamma(j+2\alpha)}{j! \Gamma(2\alpha)} \simeq \sum_{j=0}^k j^{2\alpha-1} \simeq k^{2\alpha}
$$
for $k$ large enough. Thus, by \eqref{eq100} and \eqref{eq003}, we have
\begin{eqnarray*}
&&\|(uC_\varphi-vC_\psi)J_1\|_\beta\\ %
&\lesssim& (1-|\varphi(a)|^2)^{1+\alpha} \sum_{k=0}^\infty \left( \sum_{j=0}^k \frac{\Gamma(j+2\alpha)}{j! \Gamma(2\alpha)} \right) \left\|(uC_\varphi-vC_\psi)\langle z, \varphi(a) \rangle^k\right\|_\beta \\
&\lesssim& (1-|\varphi(a)|^2)^{1+\alpha} \sum_{k=1}^\infty k^{2\alpha} |\varphi(a)|^k \left\|(uC_\varphi-vC_\psi)\Big(  \big\langle z, \frac{\varphi(a)}{|\varphi(a)|}  \big \rangle^k \Big)\right\|_\beta \\
&\lesssim& (1-|\varphi(a)|^2)^{1+\alpha} \sum_{k=1}^\infty \frac{1}{k^\alpha} \cdot k^{2\alpha}|\varphi(a)|^k \cdot \frac{\left\|(uC_\varphi-vC_\psi)\Big( \big\langle z, \frac{\varphi(a)}{|\varphi(a)|} \big\rangle^k \Big)\right\|_\beta}{\left\|  \big \langle z, \frac{\varphi(a)}{|\varphi(a)|} \big \rangle^k\right\|_\alpha} \\
&\lesssim& \sup_{j\in \N} \sup_{\xi \in \partial \B}  \frac{\|u\langle \varphi, \xi \rangle^j-v\langle \psi, \xi \rangle^j\|_\beta}{\|\langle z, \xi \rangle\|_\alpha}.
\end{eqnarray*}

The estimation of $\|(uC_\varphi-vC_\psi)J_2\|_\beta$ is similar as the previous part, by replacing the role of \eqref{eq003} by \eqref{eq002}, and
$\langle z, \varphi(a) \rangle^k$ by $
\langle z, \varphi(a) \rangle^k \langle z, \xi'_a \rangle.$  Hence, we omit the detail here.
The desired estimation follows from combining the above two estimations.

\medskip

(iii). The proof of (iii) is similar as  (ii), by replacing \eqref{eq100} by it dual version \eqref{eq200}.
\end{proof}

\begin{rem}
In the above lemma, the assumptions \eqref{eq100} and \eqref{eq200} can be dropped when $n=1$. Indeed, by the definition of $\Phi_{\varphi(a)}(z)$, we have
\begin{eqnarray*}
\Phi_{\varphi(a)}(z)%
&=& \left(\varphi(a)-P_{\varphi(a)}(z)-s_{\varphi(a)} Q_{\varphi(a)}(z) \right) \sum_{k=0}^\infty \langle z, \varphi(a) \rangle^k\\
&=& \left(\varphi(a)-P_{\varphi(a)}(z)\right) \sum_{k=0}^\infty \langle z, \varphi(a) \rangle^k,
\end{eqnarray*}
where in the last equality, we use the fact that the orthogonal projection $Q_a$ vanishes when $n=1$. Hence
\begin{eqnarray*}
\frac{\langle \Phi_{\varphi(a)}(z), \Phi_{\varphi(a)}(\psi(a)) \rangle}{|\Phi_{\varphi(a)}(\psi(a))|}%
&=& \langle \Phi_{\varphi(a)}(z), \xi_a \rangle\\
&=& \frac{|\varphi(a)|^2-1}{|\varphi(a)|^2} \cdot \langle \varphi(a), \xi_a \rangle \cdot \sum_{k=0}^\infty \langle z, \varphi(a) \rangle^k.
\end{eqnarray*}

The desired claim then follows from a similar argument as the one in Lemma \ref{lem03}, (ii) and an application of Cauchy's inequality. Hence we omit the detail here.
\end{rem}

\begin{thm} \label{boundedness}
Let $0<\alpha, \beta<\infty, u, v \in H(\B)$. Let further, $\varphi$ and $\psi$ be holomorphic self-maps of $\B$ satisfying \eqref{eq100} or \eqref{eq200}. Then $uC_\varphi-vC_\psi: H_\alpha^\infty \mapsto H_\beta^\infty$ is bounded if and only if
\begin{equation} \label{eq001}
\sup_{j \in \N} \sup_{\xi \in \partial \B} \frac{\|u\langle \varphi, \xi \rangle^j-v\langle \psi, \xi \rangle^j\|_\beta}{\|\langle z, \xi \rangle^j\|_\alpha} <\infty
\end{equation}
and
\begin{equation} \label{eq0011}
\sup_{j\in \N} \sup_{\xi, \xi' \in \partial \B}\frac{\|u\langle \varphi, \xi \rangle^j\langle \varphi, \xi' \rangle-v\langle \psi, \xi \rangle^j\langle \psi, \xi' \rangle\|_\beta}{\|\langle z, \xi \rangle^j\langle z, \xi' \rangle \|_\alpha}<\infty.
\end{equation}
\end{thm}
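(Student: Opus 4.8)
The plan is to prove the equivalence in two directions. Necessity is routine and requires neither \eqref{eq100} nor \eqref{eq200}; all the work sits in sufficiency, where Lemmas \ref{lem01}--\ref{lem03} are assembled into a single pointwise estimate.

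For necessity, set $T := uC_\varphi - vC_\psi$ and assume it is bounded. For each $j \in \N$ and $\xi, \xi' \in \partial\B$, the functions $\langle z, \xi\rangle^j$ and $\langle z, \xi\rangle^j \langle z, \xi'\rangle$ lie in $H_\alpha^\infty$ with finite norm (indeed comparable to $j^{-\alpha}$ by \eqref{eq003}, and the two-factor monomial has norm no larger). Since
$$T\big(\langle z, \xi\rangle^j\big) = u\langle \varphi, \xi\rangle^j - v\langle \psi, \xi\rangle^j, \qquad T\big(\langle z, \xi\rangle^j\langle z, \xi'\rangle\big) = u\langle \varphi, \xi\rangle^j\langle \varphi, \xi'\rangle - v\langle \psi, \xi\rangle^j\langle \psi, \xi'\rangle,$$
dividing by the corresponding $H_\alpha^\infty$-norm bounds each quotient by $\|T\|$; taking suprema over $j, \xi, \xi'$ yields \eqref{eq001} and \eqref{eq0011}.

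For sufficiency, write $S_1$ and $S_2$ for the finite suprema in \eqref{eq001} and \eqref{eq0011}. I would show $\|Tf\|_\beta \lesssim (S_1 + S_2)\|f\|_\alpha$, and by homogeneity assume $\|f\|_\alpha \le 1$. Fix $a \in \B$ and set $F_\varphi := (1-|\varphi(a)|^2)^\alpha f(\varphi(a))$ and $F_\psi := (1-|\psi(a)|^2)^\alpha f(\psi(a))$, so that $|F_\varphi|, |F_\psi| \le 1$. Absorbing the weights into $\calD_{u,\varphi}$ and $\calD_{v,\psi}$ gives the pointwise identity
$$(1-|a|^2)^\beta (Tf)(a) = \calD_{u,\varphi}(a) F_\varphi - \calD_{v,\psi}(a) F_\psi.$$
Assuming \eqref{eq100}, I would split off $\calD_{u,\varphi}(a)$:
$$(1-|a|^2)^\beta (Tf)(a) = \calD_{u,\varphi}(a)\left(F_\varphi - F_\psi\right) + \big(\calD_{u,\varphi}(a) - \calD_{v,\psi}(a)\big) F_\psi.$$
For the first summand, Lemma \ref{lem01} (with $z = \varphi(a)$, $w = \psi(a)$, and $\|f\|_\alpha \le 1$) gives $|F_\varphi - F_\psi| \lesssim \rho(\varphi(a), \psi(a))$, so its modulus is $\lesssim |\calD_{u,\varphi}(a)|\,\rho(\varphi(a), \psi(a))$, which Lemma \ref{lem02}(i) controls by $\|Tf_{\varphi(a)}\|_\beta + \|Tg_{\varphi,\psi,a}\|_\beta$. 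For the second summand, $|F_\psi| \le 1$ reduces it to $|\calD_{u,\varphi}(a) - \calD_{v,\psi}(a)|$, which Lemma \ref{lem02}(iii) controls by $\|Tf_{\varphi(a)}\|_\beta + \|Tf_{\psi(a)}\|_\beta + \|Tg_{\varphi,\psi,a}\|_\beta$ (keeping the $g_{\varphi,\psi,a}$ branch of the minimum). Since $\varphi(a), \psi(a) \in \B$, Lemma \ref{lem03}(i) dominates the $f$-terms by $S_1$, and Lemma \ref{lem03}(ii) — the only place \eqref{eq100} is used — dominates $\|Tg_{\varphi,\psi,a}\|_\beta$ by $S_1 + S_2$. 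Hence $(1-|a|^2)^\beta|(Tf)(a)| \lesssim S_1 + S_2$ uniformly in $a$, and taking the supremum over $a$ gives boundedness with $\|T\| \lesssim S_1 + S_2$.

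If instead only \eqref{eq200} is available, I would factor out $\calD_{v,\psi}(a)$ in the first summand, writing it as $\calD_{v,\psi}(a)(F_\varphi - F_\psi)$ and invoking Lemma \ref{lem02}(ii) together with Lemma \ref{lem03}(iii) and the $g_{\psi,\varphi,a}$ branch of the minimum in Lemma \ref{lem02}(iii); the remaining estimates are identical. The main obstacle is exactly this bookkeeping in the sufficiency direction: the algebraic decomposition of $(1-|a|^2)^\beta(Tf)(a)$ must be arranged so that only the auxiliary test functions whose $T$-images are controlled under the available hypothesis appear — $g_{\varphi,\psi,a}$ under \eqref{eq100}, $g_{\psi,\varphi,a}$ under \eqref{eq200} — since these are precisely the functions that Lemma \ref{lem03}(ii)/(iii) can bound by $S_1 + S_2$. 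Once the split is aligned with the hypothesis, the pointwise estimate for $Tf$ follows by feeding Lemmas \ref{lem02} and \ref{lem03} into each piece, and no estimate beyond those lemmas is needed.
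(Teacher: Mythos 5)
Your proposal is correct and follows essentially the same route as the paper: the same unit-norm monomial test functions $f_{j,\xi}$, $f_{j,\xi,\xi'}$ for necessity, and for sufficiency the same pointwise split $\calD_{u,\varphi}(a)(F_\varphi-F_\psi)+(\calD_{u,\varphi}(a)-\calD_{v,\psi}(a))F_\psi$ fed through Lemmas \ref{lem01}, \ref{lem02}(i)/(iii) and \ref{lem03}(i)/(ii). The only cosmetic difference is that you spell out the symmetric decomposition used when only \eqref{eq200} holds, where the paper simply invokes symmetry ``without loss of generality.''
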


\begin{proof}
\textbf{Necessity.} Suppose $uC_\varphi-vC_\psi$ is bounded. For any $j \in \N$ and $\xi, \xi' \in \partial \B$, consider the probe funcitons
$$
f_{j, \xi}(z)=\frac{\langle z, \xi \rangle^j}{\|\langle z, \xi \rangle^j\|_\alpha} \quad \textrm{and} \quad f_{j, \xi, \xi'}(z)=\frac{\langle z, \xi \rangle^j\langle z, \xi' \rangle}{\|\langle z, \xi \rangle^j \langle z, \xi' \rangle \|_\alpha}
$$
Then $\|f_{j, \xi}\|_{\alpha}=\|f_{j, \xi, \xi'}\|_\alpha=1$. Thus, by the boundedness of $uC_\varphi-vC_\psi$, we have
$$
\frac{\|u\langle \varphi, \xi \rangle^j-v\langle \psi, \xi \rangle^j\|_\beta}{\|\langle z, \xi \rangle^j\|_\alpha} =\|(uC_\varphi-vC_\psi) f_{j, \xi}\|_\beta \le \|uC_\varphi-vC_\psi\|<\infty,
$$
and
$$
\frac{\|u\langle \varphi, \xi \rangle^j\langle \varphi, \xi' \rangle-v\langle \psi, \xi \rangle^j\langle \psi, \xi' \rangle\|_\beta}{\|\langle z, \xi \rangle^j\langle z, \xi' \rangle \|_\alpha}=\|(uC_\varphi-vC_\psi) f_{j, \xi, \xi'} \|_\beta \le \|uC_\varphi-vC_\psi\|<\infty,
$$

The desired result then follows by take the supremum of $j$, $\xi$ and $\xi'$ on both sides of the above inequalities.

\medskip

\textbf{Sufficiency.} Suppose \eqref{eq001} and \eqref{eq0011} holds. Moreover, without the loss of generality, we assume \eqref{eq100} holds. Then for any $f \in H^\infty_\alpha$ with $\|f\|_{H^\infty_\alpha} \le 1$ and using Lemma \ref{lem01},  we have
\begin{eqnarray*}
&&\|(uC_\varphi-vC_\psi)f\|_\beta\\
&= &\sup_{z \in \B} \left| u(z)f(\varphi(z))-v(z)f(\psi(z)) \right| (1-|z|^2)^\beta \\
&\le& \sup_{z \in \B} \left| f(\varphi(z))(1-|\varphi(z)|^2)^\alpha-f(\psi(z))(1-|\psi(z)|^2)^\alpha \right| |\calD_{u, \varphi}(z)| \\
&& \quad \quad +\sup_{z \in \B} |f(\psi(z))|(1-|\psi(z)|^2)^\alpha |\calD_{u, \varphi}(z)-\calD_{v, \psi}(z)| \\
&\le&\sup_{z \in \B} \flat_\alpha(\varphi(z), \psi(z)) |\calD_{u, \varphi}(z)|+\sup_{z \in \B} |\calD_{u, \varphi}(z)-\calD_{v, \psi}(z)| \\
& \lesssim& \sup_{z \in \B}|\calD_{u, \varphi}(z)| \rho(\varphi(z), \psi(z))+\sup_{z \in \B} |\calD_{u, \varphi}(z)-\calD_{v, \psi}(z)|.
\end{eqnarray*}
Hence, by Lemma \ref{lem02} and Lemma \ref{lem03}, we have
\begin{eqnarray*}
&&\|(uC_\varphi-vC_\psi)f\|_\beta\\
&\lesssim& \sup_{a \in \B} \|(uC_\varphi-vC_\psi)f_{\varphi(a)}\|_{\beta}+\sup_{a \in \B} \sup_{a \in \B} \|(uC_\varphi-vC_\psi)f_{\psi(a)}\|_{\beta}\\
&  & +  \sup_{a \in \B} \|(uC_\varphi-vC_\psi)g_{\varphi, \psi, a}\|_\beta\\
&\lesssim& \sup_{j\in \N} \sup_{\xi \in \partial \B} \frac{\|u\langle \varphi, \xi \rangle^j-v\langle \psi, \xi \rangle^j\|_\beta}{\|\langle z, \xi \rangle^j\|_\alpha} + \sup_{j \in \N} \sup_{\xi, \xi' \in \partial \B}\frac{\|u\langle \varphi, \xi \rangle^j\langle \varphi, \xi' \rangle-v\langle \psi, \xi \rangle^j\langle \psi, \xi' \rangle\|_\beta}{\|\langle z, \xi \rangle^j\langle z, \xi' \rangle \|_\alpha}\\
&<&\infty.
\end{eqnarray*}
Therefore, $uC_\varphi-vC_\psi: H^\infty_\alpha \mapsto H^\infty_\beta$ is bounded. The proof is complete.

\end{proof}


\section{Essential norm estimates. }

In this section, we give an estimate for the essential norm of $uC_\varphi-vC_\psi$ from $H^\infty_\alpha$ to $H^\infty_\beta$.

 Recall that the essential norm $\|T\|_{e,\XX\rightarrow \YY}$ of a bounded linear operator $T:\XX\rightarrow \YY$ is defined as the distance from $T$ to the set of compact operators $K$ mapping $\XX$ into $\YY$, that is, $\|T\|_{e, \XX\rightarrow \YY}=\inf\{\|T-K\|_{\XX\rightarrow \YY}: K~\mbox{
is compact}~~\},$ where $\|\cdot\|_{\XX\rightarrow \YY}$ is the operator norm.\msk

\begin{lem} \label{lem04}
Let $0<\alpha, \beta<\infty, u, v \in H(\B)$. Let further, $\varphi$ and $\psi$ be holomorphic self-maps of $\B$. Then the following inequalities hold:
\begin{enumerate}
\item[(i).]
$$
\limsup_{|a| \to 1} \|(uC_\varphi-vC_\psi) f_a\|_\beta \lesssim \limsup_{j \to \infty} \sup_{\xi \in \partial \B} \frac{\|u\langle \varphi, \xi \rangle^j-v\langle \psi, \xi \rangle^j\|_\beta}{\|\langle z, \xi \rangle^j\|_\alpha};
$$
\item[(ii).] Suppose \eqref{eq100} holds, then
\begin{eqnarray*}
\limsup_{|\varphi(a)| \to 1} \|(uC_\varphi-vC_\psi) g_{\varphi, \psi, a} \|_\beta%
&\lesssim& \limsup_{j \to \infty} \sup_{\xi \in \partial \B} \frac{\|u\langle \varphi, \xi \rangle^j-v\langle \psi, \xi \rangle^j\|_\beta}{\|\langle z, \xi \rangle^j\|_\alpha} \\
&&+ \limsup_{j \to \infty} \sup_{\xi, \xi' \in \partial \B}\frac{\|u\langle \varphi, \xi \rangle^j\langle \varphi, \xi' \rangle-v\langle \psi, \xi \rangle^j\langle \psi, \xi' \rangle\|_\beta}{\|\langle z, \xi \rangle^j\langle z, \xi' \rangle \|_\alpha};
\end{eqnarray*}
\item[(iii).] Suppose \eqref{eq200} holds, then
\begin{eqnarray*}
\limsup_{|\psi(a)| \to 1} \|(uC_\varphi-vC_\psi) g_{\psi, \varphi, a} \|_\beta%
&\lesssim&  \limsup_{j\to \infty} \sup_{\xi \in \partial \B} \frac{\|u\langle \varphi, \xi \rangle^j-v\langle \psi, \xi \rangle^j\|_\beta}{\|\langle z, \xi \rangle^j\|_\alpha} \\
&&+ \limsup_{j\to \infty} \sup_{\xi, \xi' \in \partial \B}\frac{\|u\langle \varphi, \xi \rangle^j\langle \varphi, \xi' \rangle-v\langle \psi, \xi \rangle^j\langle \psi, \xi' \rangle\|_\beta}{\|\langle z, \xi \rangle^j\langle z, \xi' \rangle \|_\alpha}.
\end{eqnarray*}
\end{enumerate}
\end{lem}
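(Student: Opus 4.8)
The plan is to prove Lemma \ref{lem04} as the $\limsup$ counterpart of Lemma \ref{lem03}: I would reuse verbatim the power-series expansions and the term-by-term estimates from that proof, and upgrade the uniform ($\sup_j$) bounds into $\limsup_j$ bounds by truncating each defining series at a large index $N$. Since the statement is used to estimate the essential norm of a bounded operator, I may assume $uC_\varphi-vC_\psi$ is bounded; by Theorem \ref{boundedness} the quantities in \eqref{eq001} and \eqref{eq0011} are then finite, so every term $\|u\langle\varphi,\xi\rangle^k-v\langle\psi,\xi\rangle^k\|_\beta$ and its two-symbol analogue is finite and $\lesssim k^{-\alpha}$ uniformly in $\xi$. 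This finiteness is precisely what will make the low-order part of each series negligible in the limit.

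For (i), set $L_1:=\limsup_{j\to\infty}\sup_{\xi\in\partial\B}\frac{\|u\langle\varphi,\xi\rangle^j-v\langle\psi,\xi\rangle^j\|_\beta}{\|\langle z,\xi\rangle^j\|_\alpha}$ and fix $\eps>0$. First I would choose $N$ so large that the $j$-th term of this quantity is $<L_1+\eps$ for all $j\ge N$. Expanding $f_a$ as in Lemma \ref{lem03}(i) and splitting the sum at $k=N$, the tail $k\ge N$ is dominated, after inserting $k^{-\alpha}k^{\alpha}$ and invoking \eqref{eq003}, by $(L_1+\eps)(1-|a|^2)^\alpha\sum_{k=0}^\infty\frac{\Gamma(k+2\alpha)}{k!\Gamma(2\alpha)}k^{-\alpha}|a|^k\lesssim L_1+\eps$ uniformly in $a$, while the head $k<N$ is a finite sum of bounded terms times $(1-|a|^2)^\alpha$, hence tends to $0$ as $|a|\to1$. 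Taking $\limsup_{|a|\to1}$ and then letting $\eps\to0$ yields the claim.

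For (ii) and (iii), I would reuse the decomposition $g_{\varphi,\psi,a}=f_{\varphi(a)}-(J_1-J_2)$ from the proof of Lemma \ref{lem03}(ii), together with the consequence $|1-\langle\varphi(a),\xi_a'\rangle|>C$ of \eqref{eq100}. The term $f_{\varphi(a)}$ is handled by (i): as $|\varphi(a)|\to1$ forces $\varphi(a)$ into the region $|b|\to1$, one has $\limsup_{|\varphi(a)|\to1}\|(uC_\varphi-vC_\psi)f_{\varphi(a)}\|_\beta\le\limsup_{|b|\to1}\|(uC_\varphi-vC_\psi)f_b\|_\beta\lesssim L_1$. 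For $J_1$ and $J_2$ I would run the same truncation, using $\sum_{j=0}^k\frac{\Gamma(j+2\alpha)}{j!\Gamma(2\alpha)}\simeq k^{2\alpha}$ (Stirling) and the convergence factor $(1-|\varphi(a)|^2)^{1+\alpha}$ in place of $(1-|a|^2)^\alpha$: the tail of $J_1$ is controlled by $L_1$ via \eqref{eq003}, the tail of $J_2$ by the two-symbol $\limsup$ via \eqref{eq002} (with $\xi=\varphi(a)/|\varphi(a)|$ and $\xi'=\xi_a'$), and both heads vanish as $|\varphi(a)|\to1$ thanks to the vanishing prefactor. Part (iii) is identical after interchanging $\varphi$ and $\psi$ and replacing \eqref{eq100} by \eqref{eq200}.

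The hard part will not be a new idea but the bookkeeping: I must pick the truncation index $N$ once, independently of $a$, so that a single $N$ controls the tails of all three series simultaneously, and I must check that each head genuinely vanishes, which is exactly where the finiteness of the low-order terms and the decay of the prefactors $(1-|a|^2)^\alpha$ and $(1-|\varphi(a)|^2)^{1+\alpha}$ enter. A secondary point to verify is that passing $\limsup$ in $|a|$ (resp.\ $|\varphi(a)|$) through the fixed finite head and the uniform tail is legitimate, and that bounding the tail by $L_1+\eps$ (resp.\ the two-symbol limit $+\eps$) correctly separates the two $\limsup$ terms appearing on the right-hand sides of (ii) and (iii).
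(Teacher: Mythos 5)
Your proposal is correct and follows essentially the same route as the paper's proof: truncate the power-series expansions from Lemma \ref{lem03} at a fixed index $N$, bound the tails by $\sup_{j\ge N}$ of the relevant quantities (hence by the $\limsup$ plus $\eps$), let the heads vanish as $|a|\to 1$ (resp.\ $|\varphi(a)|\to 1$) via the prefactors $(1-|a|^2)^\alpha$ and $(1-|\varphi(a)|^2)^{1+\alpha}$, and handle $g_{\varphi,\psi,a}$ through the same decomposition $f_{\varphi(a)}$, $J_1$, $J_2$. Your explicit invocation of boundedness (via Theorem \ref{boundedness}) to guarantee finiteness of the finitely many head terms is a point the paper leaves implicit, but it is the intended context and does not constitute a different argument.
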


\begin{proof}  (i). For each $N \in \N$ and $a \in \B$ with $a \neq 0$, the proof of (i) in Lemma \ref{lem03} gives
\begin{eqnarray*}
&&\|(uC_\varphi-vC_\psi) f_a\|_\beta\\
&\lesssim& (1-|a|^2)^\alpha \sum_{k=0}^N  \frac{\Gamma(k+2\alpha)}{k! \Gamma(2\alpha)} |a|^k \left\| u   \langle \varphi, \frac{a}{|a|}   \rangle^k- v  \langle \psi, \frac{a}{|a|}  \rangle^k \right\|_\beta \\
&& \quad +(1-|a|^2)^\alpha \sum_{N+1}^\infty \frac{\Gamma(k+2\alpha)}{k! \Gamma(2\alpha)} k^{-\alpha} |a|^k \sup_{j \ge N+1} \sup_{\xi \in \partial \B}   \frac{\|u\langle \varphi, \xi \rangle^j-v\langle \psi, \xi \rangle^j\|_\beta}{\|\langle z, \xi \rangle^j\|_\alpha}.
\end{eqnarray*}
Now for this fixed $N$, by letting $|a| \to 1$, we have
$$
\limsup_{|a| \to 1} \|(uC_\varphi-vC_\psi) f_a\|_\beta \lesssim \sup_{j \ge N+1} \sup_{\xi \in \partial \B}   \frac{\|u\langle \varphi, \xi \rangle^j-v\langle \psi, \xi \rangle^j\|_\beta}{\|\langle z, \xi \rangle^j\|_\alpha},
$$
which implies
$$
\limsup_{|a| \to 1} \|(uC_\varphi-vC_\psi) f_a\|_\beta \lesssim \limsup_{j \to \infty} \sup_{\xi \in \partial \B} \frac{\|u\langle \varphi, \xi \rangle^j-v\langle \psi, \xi \rangle^j\|_\beta}{\|\langle z, \xi \rangle^j\|_\alpha}.
$$

(ii).  Again, from the proof of Lemma \ref{lem03}, we have for each $N \in \N$,
\begin{eqnarray*}
&& \|(uC_\varphi-vC_\psi)(g_{\varphi, \psi, a}-f_{\varphi(a)})\|_\beta\\
&\lesssim & (1-|\varphi(a)|^2)^{1+\alpha} \sum_{k=1}^N k^{2\alpha} |\varphi(a)|^k \left\| (uC_\varphi-vC_\psi) \Big(  \langle z, \frac{\varphi(a)}{|\varphi(a)|}   \rangle^k \Big ) \right\|_\beta \\
&& +(1-|\varphi(a)|^2)^{1+\alpha} \sum_{k=1}^N k^{2\alpha} |\varphi(a)|^k \left\| (uC_\varphi-vC_\psi) \Big(  \langle z, \frac{\varphi(a)}{|\varphi(a)|}   \rangle^k \langle z, \varphi(a) \rangle^k \langle z, \xi'_a \rangle\Big) \right\|_\beta \\
&&  + (1-|\varphi(a)|^2)^{1+\alpha} \sum_{k=N+1}^\infty k^{\alpha} |\varphi(a)|^k \sup_{k \ge N} \sup_{\xi \in \partial \B} \frac{\left\| (uC_\varphi-vC_\psi) \Big(   \langle z, \xi   \rangle^k \Big ) \right\|_\beta}{\left\| \langle z, \xi  \rangle \right\|_\alpha} \\
&& + (1-|\varphi(a)|^2)^{1+\alpha} \sum_{k=N+1}^\infty k^{\alpha} |\varphi(a)|^k \sup_{k \ge N} \sup_{\xi, \xi' \in \partial \B} \frac{\left\| (uC_\varphi-vC_\psi) \left(  \langle z, \xi   \rangle^k \langle z, \xi' \rangle\right ) \right\|_\beta}{\left\| \left\langle z, \xi \right\rangle ^k \langle z, \xi' \rangle \right\|_\alpha}.
\end{eqnarray*}
Let $|\varphi(a)| \to 1$, then we have
\begin{eqnarray*}
\limsup_{|\varphi(a)| \to 1} \|(uC_\varphi-vC_\psi)(g_{\varphi, \psi, a}-f_{\varphi(a)})\|_\beta%
&\lesssim& \sup_{k \ge N} \sup_{\xi \in \partial \B} \frac{\left\| (uC_\varphi-vC_\psi)  \Big( \left \langle z, \xi \right \rangle^k \Big ) \right\|_\beta}{\left\| \left\langle z, \xi \right\rangle \right\|_\alpha} \\
&& + \sup_{k \ge N} \sup_{\xi, \xi' \in \partial \B} \frac{\left\| (uC_\varphi-vC_\psi)\Big( \left \langle z, \xi \right \rangle^k \langle z, \xi' \rangle\Big ) \right\|_\beta}{\left\| \left\langle z, \xi \right\rangle ^k \langle z, \xi' \rangle \right\|_\alpha},
\end{eqnarray*}
which clearly implies the desired estimate.

\medskip

(iii). The proof for (iii) again is similar as (ii), and hence we omit it here.
\end{proof}

\begin{thm}
Let $0<\alpha, \beta<\infty, u, v \in H(\B)$. Let further, $\varphi$ and $\psi$ be holomorphic self-maps of $\B$ satisfying \eqref{eq100} and \eqref{eq200}. Suppose that $uC_\varphi: H^\infty_\alpha \mapsto H^\infty_\beta$ and  $uC_\varphi: H^\infty_\alpha \mapsto H^\infty_\beta$ are bounded, then
\begin{eqnarray*}
\|uC_\varphi-vC_\psi\|_{e, H^\infty_\alpha \mapsto H^\infty_\beta}%
&\simeq& \limsup_{j \to \infty}  \sup_{\xi \in \partial \B}\frac{\|u\langle \varphi, \xi \rangle^j-v\langle \psi, \xi \rangle^j\|_\beta}{\|\langle z, \xi \rangle^j\|_\alpha} \\
&&+  \limsup_{j \to \infty} \sup_{\xi, \xi' \in \partial \B}\frac{\|u\langle \varphi, \xi \rangle^j\langle \varphi, \xi' \rangle-v\langle \psi, \xi \rangle^j\langle \psi, \xi' \rangle\|_\beta}{\|\langle z, \xi \rangle^j\langle z, \xi' \rangle \|_\alpha}.
\end{eqnarray*}
\end{thm}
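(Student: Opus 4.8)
The plan is to prove the claimed equivalence by establishing the two inequalities $\|uC_\varphi-vC_\psi\|_{e}\gtrsim\mathcal R$ and $\|uC_\varphi-vC_\psi\|_{e}\lesssim\mathcal R$ separately, where $\mathcal R$ denotes the right-hand side of the asserted equivalence. The boundedness of $uC_\varphi-vC_\psi$ (which follows from that of $uC_\varphi$ and $vC_\psi$) guarantees the essential norm is defined. Neither \eqref{eq100} nor \eqref{eq200} is needed for the \emph{lower} bound, while both are needed for the \emph{upper} bound.

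For the lower bound I would reuse the normalized probe functions from the necessity part of Theorem \ref{boundedness}, namely $f_{j,\xi}(z)=\langle z,\xi\rangle^j/\|\langle z,\xi\rangle^j\|_\alpha$ and $f_{j,\xi,\xi'}(z)=\langle z,\xi\rangle^j\langle z,\xi'\rangle/\|\langle z,\xi\rangle^j\langle z,\xi'\rangle\|_\alpha$, each of unit norm in $H^\infty_\alpha$. By \eqref{eq003} (and its two-variable analogue) one has $|f_{j,\xi}(z)|\lesssim j^{\alpha}|z|^{j}$ on $\B$, so these functions tend to $0$ uniformly on compact subsets of $\B$ as $j\to\infty$, uniformly in $\xi,\xi'$; since the unit ball of $H^\infty_\alpha$ is a normal family, any compact $K\colon H^\infty_\alpha\to H^\infty_\beta$ sends such a bounded, locally null sequence to a norm-null sequence. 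Hence, for any compact $K$ and any sequence $j\to\infty$ with $\xi_j\in\partial\B$,
$$
\|uC_\varphi-vC_\psi-K\|\ \ge\ \limsup_{j\to\infty}\big\|(uC_\varphi-vC_\psi)f_{j,\xi_j}\big\|_\beta .
$$
Choosing $\xi_j$ to nearly realize the inner supremum at level $j$ and then taking the infimum over $K$ gives $\|uC_\varphi-vC_\psi\|_{e}\gtrsim\limsup_{j\to\infty}\sup_{\xi}\|u\langle\varphi,\xi\rangle^j-v\langle\psi,\xi\rangle^j\|_\beta/\|\langle z,\xi\rangle^j\|_\alpha$; the same argument with $f_{j,\xi,\xi'}$ produces the second term. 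As the essential norm dominates each of the two limit superiors, it dominates their sum up to a factor $2$, which is the desired lower bound.

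For the upper bound the idea is to produce an explicit approximating family of compact operators. For $0<t<1$ set $C_tf(z)=f(tz)$; a direct estimate gives $\|C_t\|\le1$ on $H^\infty_\alpha$, and $K_t:=(uC_\varphi-vC_\psi)C_t=uC_{t\varphi}-vC_{t\psi}$ is compact since $t\varphi,t\psi$ take values in the relatively compact set $\overline{t\B}\subset\B$ and $\|u\|_\beta,\|v\|_\beta<\infty$ (test the bounded operators $uC_\varphi,vC_\psi$ on $f\equiv1$). Therefore $\|uC_\varphi-vC_\psi\|_{e}\le\liminf_{t\to1}\|(uC_\varphi-vC_\psi)(I-C_t)\|$. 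To estimate the right-hand side I would apply the decomposition from the sufficiency part of Theorem \ref{boundedness} to $h=(I-C_t)f$ with $\|f\|_\alpha\le1$ and split $\sup_{z\in\B}$ into the region $\{\max(|\varphi(z)|,|\psi(z)|)\le s\}$ and its complement. On the first region normality forces $(1-|w|^2)^\alpha|h(w)|\to0$ uniformly for $|w|\le s$ as $t\to1$, so that region contributes $o(1)$; on the complement one invokes Lemma \ref{lem01} together with $\|h\|_\alpha\lesssim1$. Letting $t\to1$ and then $s\to1$ yields
$$
\|uC_\varphi-vC_\psi\|_{e}\ \lesssim\ \limsup_{\max(|\varphi(z)|,|\psi(z)|)\to1}\Big[\,|\calD_{u,\varphi}(z)|\,\rho(\varphi(z),\psi(z))+|\calD_{u,\varphi}(z)-\calD_{v,\psi}(z)|\,\Big].
$$
The final step converts this boundary limit superior into $\mathcal R$: I would bound $|\calD_{u,\varphi}|\rho$, $|\calD_{v,\psi}|\rho$, and $|\calD_{u,\varphi}-\calD_{v,\psi}|$ pointwise by the test-function norms through Lemma \ref{lem02}, and then replace the boundary limit superior of each such norm by the corresponding $\limsup_{j\to\infty}$ quantity through Lemma \ref{lem04}(i)--(iii). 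On the part of the boundary regime where $|\varphi(z)|\to1$ one uses $g_{\varphi,\psi,a}$ (requiring \eqref{eq100}), while on the part where $|\psi(z)|\to1$ with $|\varphi(z)|$ bounded one must write $|\calD_{u,\varphi}|\rho\le|\calD_{v,\psi}|\rho+|\calD_{u,\varphi}-\calD_{v,\psi}|$ and use $g_{\psi,\varphi,a}$ (requiring \eqref{eq200}); this is precisely why \emph{both} \eqref{eq100} and \eqref{eq200} are assumed, in contrast with the ``or'' hypothesis that sufficed for boundedness.

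The main obstacle is the upper bound, and within it the interior/boundary splitting of $\|(uC_\varphi-vC_\psi)(I-C_t)f\|_\beta$: one must show that the interior part genuinely vanishes as $t\to1$ \emph{uniformly over the unit ball of} $H^\infty_\alpha$ (an equicontinuity argument via normal families), while keeping the boundary part controlled uniformly in $f$, so that the supremum over $f$ survives the passage to the essential norm. A secondary but essential technical point is matching the two regimes $|\varphi(z)|\to1$ and $|\psi(z)|\to1$ with the correct test functions so that Lemma \ref{lem04}(ii)--(iii) apply, which forces the simultaneous use of \eqref{eq100} and \eqref{eq200}.
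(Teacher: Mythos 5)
Your lower bound is correct and essentially identical to the paper's: the same probes $f_{j,\xi}$, $f_{j,\xi,\xi'}$, a near-optimizing choice $\xi_k$ at each level, and the fact that a compact operator sends a bounded sequence tending to zero locally uniformly to a norm-null sequence. The upper bound, however, has a genuine gap. The compact approximants $K_t=(uC_\varphi-vC_\psi)C_t$ and the estimate $\|uC_\varphi-vC_\psi\|_{e}\le\liminf_{t\to1}\|(uC_\varphi-vC_\psi)(I-C_t)\|$ are fine (this is the ``standard argument'' the paper outsources to \cite{HLS}), but your splitting of $\B$ into only two regions, $\{\max(|\varphi(z)|,|\psi(z)|)\le s\}$ and its complement, with the Lipschitz-type decomposition of the boundedness proof applied on the whole complement, yields
$$
\|uC_\varphi-vC_\psi\|_{e}\ \lesssim\ S:=\limsup_{\max(|\varphi(z)|,|\psi(z)|)\to1}\Big[|\calD_{u,\varphi}(z)|\rho(\varphi(z),\psi(z))+|\calD_{u,\varphi}(z)-\calD_{v,\psi}(z)|\Big],
$$
and $S$ cannot be dominated by the right-hand side $\mathcal{R}$ of the theorem. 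The obstruction is the mixed regime $|\psi(z)|\to1$ with $|\varphi(z)|$ bounded away from $1$: Lemma \ref{lem02} controls $|\calD_{u,\varphi}|\rho$ and $|\calD_{u,\varphi}-\calD_{v,\psi}|$ only through $\|(uC_\varphi-vC_\psi)f_{\varphi(a)}\|_\beta$ and $\|(uC_\varphi-vC_\psi)g_{\varphi,\psi,a}\|_\beta$, and Lemma \ref{lem04} makes these small only when $|\varphi(a)|\to1$; when $|\varphi(a)|$ stays bounded they are merely bounded, not small. Your suggested fix, $|\calD_{u,\varphi}|\rho\le|\calD_{v,\psi}|\rho+|\calD_{u,\varphi}-\calD_{v,\psi}|$, just reintroduces the difference term, which has the same defect. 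Indeed $S\lesssim\mathcal{R}$ is simply false under the theorem's hypotheses: in the disk take $u(z)=(1-z)^{-\beta}$, $\varphi\equiv0$, $v\equiv0$, $\psi(z)=z$; then \eqref{eq100}, \eqref{eq200} and the boundedness hypotheses hold, $uC_\varphi-vC_\psi=uC_\varphi$ is rank one, so its essential norm and $\mathcal{R}$ both vanish (the symbols $\langle\varphi,\xi\rangle^j$ for $j\ge1$ and $\langle\varphi,\xi'\rangle$ are identically $0$), yet along $z=r\to1$ one gets $|\calD_{u,\varphi}(r)|\big(1+\rho(0,r)\big)=(1+r)^{\beta}(1+r)\to2^{\beta+1}$, so $S\ge2^{\beta+1}>0$. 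No argument can pass from $S$ to $\mathcal{R}$.

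The repair must occur in the region decomposition, not afterwards. Split the complement further according to which of $|\varphi(z)|,|\psi(z)|$ exceeds $s$. On a mixed region, say $\{|\varphi(z)|\le s<|\psi(z)|\}$, do not use the Lipschitz estimate at $\varphi(z)$; instead use that $h=(I-C_t)f$ is uniformly small on $\{|w|\le s\}$, so $|\calD_{u,\varphi}(z)|\,|h(\varphi(z))|(1-|\varphi(z)|^2)^\alpha=o(1)$ as $t\to1$, and only $|\calD_{v,\psi}|\rho$ survives there. This yields the three correctly localized terms
$$
\lim_{r\to1}\sup_{|\varphi(z)|>r}|\calD_{u,\varphi}|\rho+\lim_{r\to1}\sup_{|\psi(z)|>r}|\calD_{v,\psi}|\rho+\lim_{r\to1}\sup_{|\varphi(z)|,\,|\psi(z)|>r}|\calD_{u,\varphi}-\calD_{v,\psi}|,
$$
in which the difference term is taken only where \emph{both} symbols approach the boundary, and the single terms are paired with the symbol that is actually large. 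Each term can then be converted into the two $\limsup_{j\to\infty}$ quantities by Lemma \ref{lem02} together with Lemma \ref{lem04}(i)--(iii); this is exactly where \eqref{eq100} and \eqref{eq200} enter, as you anticipated. With this finer splitting your argument closes and coincides with the paper's proof, which starts from precisely this three-term bound.
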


\begin{proof}
Following a standard argument (see, e.g., \cite[Theorem 3.1]{HLS}), one can easily establish that
\begin{eqnarray*}
\|uC_\varphi-vC_\psi\|_{e, H^\infty_\alpha \mapsto H^\infty_\beta}%
&\lesssim& \lim_{r \to 1} \sup_{|\varphi(z)|>r} |\calD_{u,\varphi}(z)|\rho(\varphi(z), \psi(z))\\
&& +\lim_{r \to 1} \sup_{|\psi(z)|>r} |\calD_{v,\psi}(z)|\rho(\varphi(z), \psi(z))\\
&&+ \lim_{r \to 1} \sup_{|\varphi(z)|, |\psi(z)|>r} |\calD_{u, \varphi}(z)-\calD_{v, \psi}(z)|.
\end{eqnarray*}
Thus, by Lemmas \ref{lem02} and   \ref{lem04}, we have
\begin{eqnarray*}
&& \|uC_\varphi-vC_\psi\|_{e, H^\infty_\alpha \mapsto H^\infty_\beta}\\%
&\lesssim&  \limsup_{|\varphi(a)| \to 1} \|(uC_\varphi-vC_\psi)f_{\varphi(a)}\|_\beta  + \limsup_{|\psi(a)| \to 1} \|(uC_\varphi-vC_\psi)f_{\psi(a)}\|_\beta \\
&& + \limsup_{|\varphi(a)| \to 1} \|(uC_\varphi-vC_\psi)g_{\varphi, \psi, a}\|_\beta   + \limsup_{|\psi(a)| \to 1} \|(uC_\varphi-vC_\psi)g_{\psi, \varphi, a}\|_\beta\\
& \lesssim& \limsup_{j\to \infty} \sup_{\xi \in \partial \B} \frac{\|u\langle \varphi, \xi \rangle^j-v\langle \psi, \xi \rangle^j\|_\beta}{\|\langle z, \xi \rangle^j\|_\alpha} \\
&& + \limsup_{j \to \infty} \sup_{\xi, \xi' \in \partial \B}\frac{\|u\langle \varphi, \xi \rangle^j\langle \varphi, \xi' \rangle-v\langle \psi, \xi \rangle^j\langle \psi, \xi' \rangle\|_\beta}{\|\langle z, \xi \rangle^j\langle z, \xi' \rangle \|_\alpha}.
\end{eqnarray*}

Next, we shall prove that
\begin{eqnarray*}
\|uC_\varphi-vC_\psi\|_{e, H^\infty_\alpha \mapsto H^\infty_\beta}%
&\gtrsim&  \limsup_{j\to \infty} \sup_{\xi \in \partial \B} \frac{\|u\langle \varphi, \xi \rangle^j-v\langle \psi, \xi \rangle^j\|_\beta}{\|\langle z, \xi \rangle^j\|_\alpha} \\
&& + \limsup_{j \to \infty} \sup_{\xi, \xi' \in \partial \B}\frac{\|u\langle \varphi, \xi \rangle^j\langle \varphi, \xi' \rangle-v\langle \psi, \xi \rangle^j\langle \psi, \xi' \rangle\|_\beta}{\|\langle z, \xi \rangle^j\langle z, \xi' \rangle \|_\alpha}.
\end{eqnarray*}
To see this, recall the test functions $f_{j, \xi}$ and $f_{j, \xi, \xi'}$ defined in the proof of Theorem \ref{boundedness}, namely
for $j \in \N$ and $\xi, \xi' \in \partial \B$, we write
$$
f_{j, \xi}(z)=\frac{\langle z, \xi \rangle^j}{\|\langle z, \xi \rangle^j \|_\alpha} \quad \textrm{and} \quad f_{j, \xi, \xi'}(z)=\frac{\langle z, \xi \rangle^j \langle z, \xi' \rangle}{\|\langle z, \xi \rangle^j \langle z, \xi' \rangle\|_\alpha}.
$$
We first show that
\begin{equation} \label{eq006}
\|uC_\varphi-vC_\psi\|_{e, H^\infty_\alpha \mapsto H^\infty_\beta} \ge  \limsup_{j \to \infty} \sup_{\xi \in \partial \B} \frac{\|u\langle \varphi, \xi \rangle^j-v\langle \psi, \xi \rangle^j\|_\beta}{\|\langle z, \xi \rangle^j\|_\alpha} .
\end{equation}
Since $\|f_{j, \xi}\|_\alpha=1$ for all $j\in \N$ and $\xi \in \partial \B$, by the boundedness assumption on both $uC_\varphi$ and $vC_\psi$, we have
$$
\sup_{j\in \N} \sup_{\xi \in \partial \B} \|(uC_\varphi-vC_\psi)(f_{j, \xi})\|_\beta<\infty.
$$
Take and fix any $\varepsilon>0$, then for each $k \in \N$, we can take a $\xi_k \in \partial \B$, such that
$$
\|(uC_\varphi-vC_\psi)(f_{k, \xi_k})\|_\beta \ge \sup_{\xi \in \partial \B} \| (uC_\varphi-vC_\psi)(f_{k, \xi})\|_\beta-\varepsilon.
$$
Now we consider the set $\{f_{k, \xi_k}\}_{k \in \N}$. It is clear that  $f_{k, \xi_k} \to 0$  uniformly on compact subsets of $\B$. Hence, if $K$ is any compact operator from $H^\infty_\alpha$ to $H^\infty_\beta$, then
$$
\lim_{k \to \infty} \|Kf_{k, \xi_k}\|_{\beta}=0
$$
Hence
\begin{eqnarray*}
\|uC_\varphi-vC_\psi-K\|_{ H^\infty_\alpha \mapsto H^\infty_\beta}%
&\ge& \limsup_{k \to \infty} \|(uC_\varphi-vC_\psi-K)(f_{k, \xi_k})\|_\beta \\
&=& \limsup_{k \to \infty} \|(uC_\varphi-vC_\psi)(f_{k, \xi_k})\|_\beta \\
&\ge& \limsup_{k \to \infty} \sup_{\xi \in \partial \B} \| (uC_\varphi-vC_\psi)(f_{k, \xi})\|_\beta-\varepsilon,
\end{eqnarray*}
which implies the desired result by first letting $\varepsilon \to 0$ and then taking the infimum of $K$ over all the compact operators.

Similarly, we can show that
$$
\|uC_\varphi-vC_\psi\|_{e, H^\infty_\alpha \mapsto H^\infty_\beta} \ge   \limsup_{j \to \infty} \sup_{\xi, \xi' \in \partial \B}\frac{\|u\langle \varphi, \xi \rangle^j\langle \varphi, \xi' \rangle-v\langle \psi, \xi \rangle^j\langle \psi, \xi' \rangle\|_\beta}{\|\langle z, \xi \rangle^j\langle z, \xi' \rangle \|_\alpha}.
$$
Finally, combining the above estimate with \eqref{eq006}, we get the desired result.
\end{proof}

As a corollary, we have the following result on the compactness of $uC_\varphi-vC_\psi$, which is simply due to the essential norm of a compact operator is $0$.

\begin{cor}
Let $0<\alpha, \beta<\infty, u, v \in H(\B)$. Let further, $\varphi$ and $\psi$ be holomorphic self-maps of $\B$ satisfying \eqref{eq100} and \eqref{eq200}. Suppose that $uC_\varphi: H^\infty_\alpha \mapsto H^\infty_\beta$ and  $uC_\varphi: H^\infty_\alpha \mapsto H^\infty_\beta$ are bounded, then $uC_\varphi-vC_\psi$ is compact if and only if
$$
\limsup_{j \to \infty}  \sup_{\xi \in \partial \B}\frac{\|u\langle \varphi, \xi \rangle^j-v\langle \psi, \xi \rangle^j\|_\beta}{\|\langle z, \xi \rangle^j\|_\alpha} =0
$$
and
$$
 \limsup_{j \to \infty} \sup_{\xi, \xi' \in \partial \B}\frac{\|u\langle \varphi, \xi \rangle^j\langle \varphi, \xi' \rangle-v\langle \psi, \xi \rangle^j\langle \psi, \xi' \rangle\|_\beta}{\|\langle z, \xi \rangle^j\langle z, \xi' \rangle \|_\alpha}=0.
$$
\end{cor}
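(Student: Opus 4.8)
The plan is to deduce this compactness criterion directly from the essential norm estimate established in the preceding theorem, combined with the elementary principle that a bounded linear operator $T \colon \XX \to \YY$ is compact if and only if $\|T\|_{e, \XX \to \YY} = 0$. Under the standing hypotheses---that is, $0 < \alpha, \beta < \infty$, the maps $\varphi, \psi$ satisfying both \eqref{eq100} and \eqref{eq200}, and $uC_\varphi$, $vC_\psi$ individually bounded from $H^\infty_\alpha$ to $H^\infty_\beta$---the difference $uC_\varphi - vC_\psi$ is a bounded operator, so its essential norm is well defined and finite, and the previous theorem applies verbatim.

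First I would invoke the essential norm theorem to write
$$
\|uC_\varphi - vC_\psi\|_{e, H^\infty_\alpha \mapsto H^\infty_\beta} \simeq A + B,
$$
where
$$
A = \limsup_{j \to \infty} \sup_{\xi \in \partial \B} \frac{\|u\langle \varphi, \xi \rangle^j - v\langle \psi, \xi \rangle^j\|_\beta}{\|\langle z, \xi \rangle^j\|_\alpha}
$$
and
$$
B = \limsup_{j \to \infty} \sup_{\xi, \xi' \in \partial \B} \frac{\|u\langle \varphi, \xi \rangle^j \langle \varphi, \xi' \rangle - v\langle \psi, \xi \rangle^j \langle \psi, \xi' \rangle\|_\beta}{\|\langle z, \xi \rangle^j \langle z, \xi' \rangle\|_\alpha}.
$$
Here $\simeq$ means there are positive constants $c$ and $C$, independent of the data, with $c(A + B) \le \|uC_\varphi - vC_\psi\|_{e, H^\infty_\alpha \mapsto H^\infty_\beta} \le C(A + B)$.

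Then I would carry out the two-sided implication. If $uC_\varphi - vC_\psi$ is compact, its essential norm vanishes, so the lower bound $\|uC_\varphi - vC_\psi\|_{e, H^\infty_\alpha \mapsto H^\infty_\beta} \ge c(A + B)$ forces $A + B = 0$; since $A$ and $B$ are each a $\limsup$ of nonnegative quantities, both are nonnegative, and hence $A = 0$ and $B = 0$. Conversely, if $A = B = 0$, then the upper bound gives $\|uC_\varphi - vC_\psi\|_{e, H^\infty_\alpha \mapsto H^\infty_\beta} \le C(A + B) = 0$, so the essential norm is $0$ and $uC_\varphi - vC_\psi$ is compact.

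There is essentially no analytic obstacle here, since all of the substantive work is already contained in the essential norm estimate. The only points that require care are bookkeeping ones: ensuring the boundedness hypotheses so that the essential norm is finite and the theorem is applicable, and observing that the nonnegativity of $A$ and $B$ allows the vanishing of their sum to be promoted to the simultaneous vanishing of each term. I would state these two observations explicitly so that the corollary reads as self-contained.
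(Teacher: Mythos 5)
Your proposal is correct and follows exactly the paper's own route: the paper derives this corollary in one line from the essential norm theorem, using the fact that a bounded operator is compact if and only if its essential norm vanishes. Your additional bookkeeping remarks (finiteness of the essential norm under the boundedness hypotheses, and promoting $A+B=0$ to $A=B=0$ by nonnegativity) are sound and merely make explicit what the paper leaves implicit.
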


\end{document}